\newtheorem{theorem}{Theorem}[section]
\newtheorem{lemma}[theorem]{Lemma}
\DeclareMathOperator{\vol}{vol}
\DeclareMathOperator{\tr}{tr}
\DeclareMathOperator{\coeff}{coeff}
\newcommand{\Fcal}{\mathcal{F}}
\newcommand{\Ical}{\mathcal{I}}
\newcommand{\Kcal}{\mathcal{K}}
\newcommand{\Pcal}{\mathcal{P}}
\newcommand{\Scal}{\mathcal{S}}
\newcommand{\so}{{\rm SO}}
\newcommand{\M}{{\rm M}}
\newcommand{\tsp}{{\sf T}}
\newcommand{\fh}{\widehat{f}}
\newcommand{\one}{{\bf 1}}
\newcommand{\Sym}{{\rm S}}
\newcommand{\C}{\mathbb{C}}
\newcommand{\R}{\mathbb{R}}
\newcommand{\Z}{\mathbb{Z}}
\newcommand{\defi}[1]{\textit{#1}}
\newcommand{\dates}[2]{$\mathnormal{#1}$--$\mathnormal{#2}$}
\newcommand{\floor}[1]{\lfloor #1\rfloor}
\title{Computing upper bounds for the packing density of congruent copies of a
  convex body}
\author{Fernando M\'ario de Oliveira Filho}
\address{F.M.~de Oliveira Filho, 
Instituto de Matem\'atica e Estat\'\i stica, 
Universidade de S\~ao Paulo, Rua do Mat\~ao, 1010, 05508-090 S\~ao
Paulo, Brazil}
\email{fmario@gmail.com}
\author{Frank Vallentin} 
\address{F.~Vallentin, Mathematisches Institut, Universit\"at zu
  K\"oln, Weyertal~86--90, 50931 K\"oln, Germany}
\email{frank.vallentin@uni-koeln.de}
\date{July 6, 2016}
\thanks{The first author was support by Rubicon grant 680-50-1014 from
  the Netherlands Organization for Scientific Research (NWO). The
  second author was supported by Vidi grant 639.032.917 from the
  Netherlands Organization for Scientific Research (NWO)}
\subjclass{52C17, 90C22}
\keywords{Tetrahedra packings, pentagon packings, sphere packings,
  Lov\'asz theta number, Delsarte's method, Euclidean motion group,
  polynomial optimization, semidefinite programming}
\begin{document}

\begin{abstract}
  In this paper we prove a theorem that provides an upper bound for
  the density of packings of congruent copies of a given convex body
  in~$\R^n$; this theorem is a generalization of the linear
  programming bound for sphere packings. We illustrate its use by
  computing an upper bound for the maximum density of packings of
  regular pentagons in the plane. Our computational approach is
  numerical and uses a combination of semidefinite programming, sums
  of squares, and the harmonic analysis of the Euclidean motion group.
  We show how, with some extra work, the bounds so obtained can be
  made rigorous.
\end{abstract}

\maketitle

\markboth{F.M. de Oliveira Filho and F. Vallentin}{Computing upper
  bounds for the packing density of congruent copies of a convex
  body}

%
%

\section{Introduction}

How much of Euclidean space can be filled with pairwise nonoverlapping
congruent (i.e., rotated and translated) copies of a given convex
body~$\Kcal$?

A union of congruent copies of~$\Kcal$ with pairwise disjoint
interiors is a \defi{packing} of congruent copies of~$\Kcal$, or just
a packing of~$\Kcal$ for short; below, packings are always packings of
congruent copies of the body.  The \defi{density} of a packing is the
fraction of Euclidean space it covers. Rewritten, the question of the
previous paragraph is: What is the maximum density of a packing of
congruent copies of~$\Kcal$? We call this the \defi{body packing
  problem}.

Theorem~\ref{thm:main}, the main theorem of this paper, provides a way
to compute upper bounds to the density of any packing of a given
convex body~$\Kcal$. We then illustrate the use of this theorem by
applying computational methods to obtain bounds for packings of
regular pentagons in the Euclidean plane --- the case when~$\Kcal
\subseteq \R^2$ is a regular pentagon.

Before presenting our main theorem and its application, let us first
survey some of the most interesting cases of the body packing
problem. We refer to the following books and survey for more
information: Conway and Sloane \cite{ConwayS}, Brass, Moser and Pach
\cite{BrassMP}, Bezdek and Kuperberg \cite{BezdekK}, Fejes T\'oth and
Kuperberg \cite{FejesTothK}.

Perhaps the most well-known case occurs when~$\Kcal$ is a unit
ball. We then have the classical \defi{sphere packing problem}. It is
easy to find out which sphere packings are optimal (i.e., attain the
maximum packing density) in dimensions~$1$ and~$2$. In dimension~$3$,
it was conjectured by the German mathematician and astronomer Johannes
Kepler (\dates{1571}{1630})~\cite{Kepler} that a certain packing
covering~$\pi / (3 \sqrt{2}) = 0.74048\ldots$ of space is optimal. The
Kepler conjecture has been proven by Hales~\cite{Hales}, who makes
massive use of computers in his proof. On August 10, 2014, the
flyspeck project was completed which had the purpose to produce a
formal proof of the Kepler conjecture, see \cite{HalesEtc}.

In all other dimensions the best known upper bounds come from the
linear programming bound of Cohn and Elkies~\cite{CohnE}. For a long
time this upper bound was conjectured to provide tight bounds in
dimensions~$8$ and~$24$ and there was very strong numerical evidence
to support this conjecture, see Cohn and Kumar~\cite{CohnK} or Cohn
and Miller~\cite{CohnM}; the only thing missing was a rigorous
proof. Very recently, on March 14, 2016 Viazovska \cite{Viazovska}
announced a proof for dimension $8$ and a few days later, on March 21,
2016, building on Viazovska's breakthrough result, Cohn, Kumar,
Miller, Radchenko, and Viazovska \cite{CohnKM} announced a proof for
dimension $24$.  In dimensions~$4$, $5$, $6$, $7$, and~$9$ the linear
programming bound of Cohn and Elkies was improved by de Laat,
Oliveira, and Vallentin~\cite{LaatOV}, who also provided upper bounds
for binary sphere packings, that is, for packings of spheres of two
different sizes.

Another case of the body packing problem that has attracted attention
happens when~$\Kcal$ is a regular tetrahedron in~$\R^3$. We called the
sphere packing problem ``classical'', but this adjective most properly
applies to the problem of packing tetrahedra, as it was considered by
Aristotle ($\mathnormal{384}$ BC--$\mathnormal{322}$ BC).

In his treatise \textit{De Caelo} (On the Heavens), Aristotle attacks
the Platonic theory of assigning geometrical figures, namely the
Platonic solids, to the elements, stating (cf.~\textit{De Caelo},
Book~III, Chapter~VIII, in translation by Guthrie~\cite{Guthrie}):
\medbreak

\begin{quote}
This attempt to assign geometrical figures to the simple bodies is on
all counts irrational. In the first place, the whole of space will not
be filled up. Among surfaces it is agreed that there are three figures
which fill the place that contains them --- the triangle, the square,
and the hexagon: among solids only two, the pyramid and the cube. But
they need more than these, since they hold that the elements are more.
\end{quote}
\medbreak

Here, the ``pyramid'' is the regular tetrahedron. Aristotle then
thought it to be possible to tile the space with regular
tetrahedra. Only much latter, Johannes M\"uller von K\"onigsberg
(\dates{1436}{1476}), commonly known as Regiomontanus, a pioneer of
trigonometry, would prove that it is actually not possible to do
so --- it is amusing to observe that this in fact makes Aristotle's
argument stronger.

Regiomontanus' manuscript, titled \textit{De quinque corporibus
  aequilateris quae vulgo regularis nuncupantur: quae videlicet eorum
  locum impleant corporalem et quae non, contra commentatorem
  Aristotelis Averroem}\footnote{On the five equilateral bodies, that
  are usually called regular, and which of them fill their natural
  space, and which do not, in contradiction to the commentator of
  Aristotle, Averro\"es.}, is lost. The Italian mathematician and
astronomer Francesco Maurolico (\dates{1494}{1575}) mentions
Regiomontanus' work on a manuscript of very similar
title~\cite{Maurolico}. Considering Regiomontanus' manuscript as lost,
he sets out to obtain the same results. He observes (cf.~\S2, ibid.)
that the angles between the faces of a solid are of importance in
determining whether the solid tiles space or not. Nowadays one may
easily check that the angle between two faces of a regular tetrahedron
is $\arccos\frac{1}{3} \approx 70.52877^\circ$, thus a little less
than~$360^\circ / 5 = 72^\circ$, and one sees that it is therefore impossible to
tile~$\R^3$ with regular tetrahedra. Maurolico himself did a similar
computation (cf.~\S73, ibid.):
\medbreak

\begin{quote}
(...) Nunc exponam hosce angulos cum suis chordis hic inferius:
\smallskip

\noindent
Pyramidis angulus --
gradus~$70$. minuti\ae~$31$. secund\ae~$43\frac{1}{2}$. chorda $1154701$.\footnote{Below
  I show these angles with their chords:

Angle of the pyramid -- $70$~degrees. $31$~minutes.
$43\frac{1}{2}$~seconds. chord~$1154701$.}
\end{quote}
\medbreak

More on the history of the tetrahedra packing problem can be found in
the paper by Lagarias and Zong~\cite{LagariasZ}. 

In~2006, Conway and Torquato~\cite{ConwayT} found surprisingly dense
packings of tetrahedra. This sparked renewed interest in the problem
and a race for the best construction (cf.~Lagarias and
Zong~\cite{LagariasZ} and Ziegler~\cite{Ziegler}). The current record
is held by Chen, Engel, and Glotzer~\cite{ChenEG}, who found a packing
with density~$\approx 0.8563$, a much larger fraction of space than
that which can be covered by spheres. This prompted the quest for
upper bounds: We know tetrahedra do not tile space, so the maximum
packing density is strictly less than~1. The current record rests with Gravel,
Elser, and Kallus~\cite{GravelEK}, who proved an upper bound of~$1 -
2.6\ldots \cdot 10^{-25}$. They are themselves convinced that the
bound can be greatly improved:
\medbreak

\begin{quote}
  In fact, we conjecture that the optimal packing density corresponds
  to a value of~$\delta$ [the fraction of empty space] many orders of
  magnitude larger than the one presented here. We propose as a
  challenge the task of finding an upper bound with a significantly
  larger value of~$\delta$ (e.g.,~$\delta > 0.01$) and the development
  of practical computational methods for establishing informative
  upper bounds.
\end{quote}
\medbreak

In 1964, in his famous little book on packing and covering Rogers
noted \cite[p.~12]{Rogers}: ``Little precise is known about the
packing density $\delta(\mathcal{K})$ in three or more dimensions.''
Since then the general situation did not improve much. In dimension three, the only cases where the optimal packing density is known are the cases when $\mathcal{K}$ is a space filling polytope, or when $\mathcal{K}$ is the unit ball, or when $\mathcal{K}$ is a slight truncation of the rhombic dodecahedron, see \cite{BezdekK}. In particular finding good upper bounds for the body packing problem is very difficult. Recently, progress was made by Fejes T\'oth, Fodor, and V{\'{\i}}gh \cite{FejesTothFV} in the case when $\mathcal{K}$ is the $n$-dimensional cross polytope.

Our paper can be seen as a step in the search for good upper bounds
for the maximum density of body packings in general and tetrahedra
packings in particular. Its main theorem is a generalization of the
linear programming bound of Cohn and Elkies~\cite{CohnE} for the
sphere packing density, which provides the best known upper bounds in
small and high dimensions (cf.~Cohn and Zhao~\cite{CohnZ}). To specify
a sphere packing it suffices to give the centers of the spheres in the
packing; this is the reason why the Cohn-Elkies bound is a
\textit{linear programming} bound. In our case, to specify a packing
of congruent copies of a body, we need also to consider different
rotations of the body, and so linear programming is replaced by
semidefinite programming.

We apply the theorem to packings of pentagons in the plane because the
specific structure of the Euclidean plane simplifies computations and
because such packings are interesting in themselves (see
e.g.~Kuperberg and Kuperberg~\cite{KuperbergK},
Casselman~\cite{Casselman}, and Atkinson, Jiao, and
Torquato~\cite{AtkinsonJT}), obtaining an upper bound of~$0.98103$ for
the density of any such packing. The best known construction is a
packing consisting of pentagons placed in two opposite orientations
achieving a density of $(5 - \sqrt{5}) / 3 = 0.9213\ldots$
(cf.~Kuperberg and Kuperberg, ibid.). Kallus and Kusner~\cite{KallusK}
showed that the pentagon packing of Kuperberg and Kuperberg cannot be
improved by small perturbations.

Using more refined computational tools, for example using a numerically
stable complex semidefinite programming solver, it is conceivable that
our upper bound could be improved. Our main goal however was to show
how the theorem can be applied and that it gives bounds well-below the
trivial bound of~$1$.  Our long-term goal is to apply the theorem to
obtain upper bounds for packings in~$\R^3$, in particular tetrahedra
packings.

In fact, after the first draft of our paper was finished and
submitted, Hales and Kusner \cite{HalesK} improved our upper bound for
pentagon packings to $0.9611$. For this they used an entirely
different method, based on area estimates of Voronoi cells.

\subsection{The main theorem}

We defined the density of a packing informally, as the fraction of
space it covers. There are different ways to formalize this
definition, and questions arise as to whether every packing has a
density and so on. We postpone such discussion to \S\ref{sec:proof},
when we shall prove the main theorem.

Let~$\so(n)$ be the group of rotations of~$\R^n$, that is,
\[
\so(n) = \{\, A \in \R^{n \times n} : \text{$A^\tsp A = I$ and $\det A
  = 1$}\,\}.
\]
The set~$\M(n) = \R^n \times \so(n)$ is a group with identity
element~$(0, I)$, multiplication defined as
\[
(x, A) (y, B) = (x + Ay, AB),
\]
and inversion given by
\[
(x, A)^{-1} = (-A^{-1} x, A^{-1}).
\]

The group~$\M(n)$, the semidirect product~$\R^n \rtimes \so(n)$, is
the \defi{Euclidean motion group} of~$\R^n$; it is a noncompact (but
locally compact), noncommutative group. When we integrate functions
over~$\M(n)$, we always use the measure~$d(x, A)$, which is the
product of the Lebesgue measure~$dx$ for~$\R^n$ with the Haar
measure~$dA$ for~$\so(n)$, normalized so that~$\so(n)$ has total
measure~$1$.

A bounded complex-valued function~$f\in L^\infty(\M(n))$ is said to be of
\defi{positive type} if 
\[
f(x, A) =
\overline{f((x, A)^{-1})}\quad\text{for all~$(x, A) \in \M(n)$}
\]
and for all~$\rho \in L^1(\M(n))$ we have
\[
\int_{\M(n)} \int_{\M(n)} f((y, B)^{-1} (x, A)) \rho(x, A)
\overline{\rho(y, B)}\, d(y, B) d(x, A) \geq 0.
\]
With this we have all we need for presenting the main theorem.

\begin{theorem}
\label{thm:main}
Let~$\Kcal \subseteq \R^n$ be a convex body and let~$f \in L^1(\M(n))$
be a bounded real-valued function such that:

\begin{enumerate}
\item[(i)] $f$ is continuous and of positive type;

\item[(ii)] $f(x, A) \leq 0$ whenever~$\Kcal^\circ \cap (x + A \Kcal^\circ) =
  \emptyset$, where~$\Kcal^\circ$ is the interior of~$\Kcal$;

\item[(iii)] $\lambda = \int_{\M(n)} f(x, A)\, d(x, A) > 0$.
\end{enumerate}

\noindent
Then the density of any packing of congruent copies of~$\Kcal$ is at
most
\[
\frac{f(0, I)}{\lambda} \vol \Kcal,
\]
where~$\vol \Kcal$ is the volume of~$\Kcal$.
\end{theorem}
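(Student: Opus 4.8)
The plan is to run the natural analogue of the Cohn--Elkies proof for sphere packings. First I would reduce to \emph{periodic} packings, those invariant under translation by a full-rank lattice $L\subseteq\R^n$: a standard periodization argument --- discard the copies meeting the boundary of a large cube, tile space by slightly enlarged translates of that cube leaving a buffer of width $\operatorname{diam}\Kcal$ between copies, and let the cube grow --- shows that the supremum of attainable densities is not changed by this restriction, and it is precisely here that one pins down what ``density'' means (the discussion postponed to~\S\ref{sec:proof}). A periodic packing is described by $L$ together with finitely many motions $g_1=(x_1,A_1),\dots,g_N=(x_N,A_N)$, one for each congruent copy of $\Kcal$ in a fundamental cell, and its density equals $N\vol\Kcal/\vol(\R^n/L)$. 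Since a continuous function of positive type that vanishes at $(0,I)$ vanishes identically, hypothesis~(iii) forces $f(0,I)>0$; so it suffices to prove $\vol(\R^n/L)\ge N\lambda/f(0,I)$.

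Set $\Phi(x,A)=\sum_{(p,P)}f((x,A)^{-1}(p,P))$, the sum running over the motions $(p,P)$ of all copies in the packing. For the \emph{upper} bound, look at $\Phi(g_j)$: the term with $(p,P)=g_j$ is $f(0,I)$, while for any other copy $(p,P)$ the copies $p+P\Kcal$ and $x_j+A_j\Kcal$ have disjoint interiors, so writing $(u,U)=g_j^{-1}(p,P)$ one has $\Kcal^\circ\cap(u+U\Kcal^\circ)=\emptyset$, and condition~(ii) gives $f(u,U)\le 0$. Hence $\Phi(g_j)\le f(0,I)$, so $\sum_{j=1}^N\Phi(g_j)\le N f(0,I)$.

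For the \emph{lower} bound I would bring in the harmonic analysis of $\M(n)$. The subgroup $\Lambda_0=L\times\{I\}$ is a cocompact lattice in $\M(n)$, with $\Lambda_0\backslash\M(n)\cong(\R^n/L)\times\so(n)$ of total mass $\vol(\R^n/L)$, and the operator $R(f)=\int_{\M(n)}f(x,A)\,R(x,A)\,d(x,A)$ on $L^2(\Lambda_0\backslash\M(n))$, where $R$ is the right regular representation, has integral kernel $K_f(y,z)=\sum_{\gamma\in\Lambda_0}f(y^{-1}\gamma z)$; thus $\sum_{j=1}^N\Phi(g_j)=\sum_{i,j=1}^N K_f(\Lambda_0 g_i,\Lambda_0 g_j)$. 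Because $f$ is of positive type, $R(f)$ is a positive operator; the constant function $\one$ satisfies $R(f)\one=\lambda\one$ by~(iii) and spans the trivial subrepresentation, so $R(f)$ restricted to $\one^\perp$ is still positive, i.e.\ $K_f(y,z)-\lambda/\vol(\R^n/L)$ is a positive semidefinite kernel on $\Lambda_0\backslash\M(n)$. Evaluating the associated quadratic form at $\Lambda_0 g_1,\dots,\Lambda_0 g_N$ with all weights equal to $1$ yields $\sum_{i,j}K_f(\Lambda_0 g_i,\Lambda_0 g_j)\ge N^2\lambda/\vol(\R^n/L)$. Combining with the upper bound, $N f(0,I)\ge N^2\lambda/\vol(\R^n/L)$, hence $\vol(\R^n/L)\ge N\lambda/f(0,I)$, and therefore the density $N\vol\Kcal/\vol(\R^n/L)$ is at most $(f(0,I)/\lambda)\vol\Kcal$.

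The main obstacle is this lower bound: it is the exact counterpart of the step that for sphere packings is Poisson summation, and it is where the structure of $\M(n)$ is used --- one must set up the spectral decomposition of $L^2(\Lambda_0\backslash\M(n))$ (using that $\M(n)$ is of type~I and $\Lambda_0$ is cocompact), deduce positivity of $R(f)$ from the integral definition of positive type, and isolate the contribution of the trivial representation, which is what produces the factor $N^2\lambda$. The genuinely delicate technical point is convergence: the periodization series defining $\Phi$ and $K_f$ need not converge for an arbitrary bounded continuous $f\in L^1(\M(n))$, so I would first prove the theorem for $f$ with sufficiently rapid decay (making all periodizations absolutely convergent) and then remove the restriction by approximation. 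The reduction to periodic packings at the outset also needs the careful treatment of density alluded to above.
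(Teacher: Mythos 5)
Your proposal follows essentially the same route as the paper's proof. The reduction to periodic packings, the periodization of $f$ over the lattice (your $K_f(y,z)=\sum_{\gamma\in\Lambda_0}f(y^{-1}\gamma z)$ is exactly the kernel $K_L$ of~\eqref{eq:kernel-def}), the upper bound $\sum_j\Phi(g_j)\le Nf(0,I)$ from condition~(ii), and the lower bound $N^2\lambda/\vol(\R^n/L)$ from positivity of the periodized kernel together with the constant function being an eigenfunction with eigenvalue $\lambda$ are precisely the ingredients of \S\ref{sec:proof}; the paper merely packages the two inequalities as a generalized Lov\'asz theta bound (Theorem~\ref{thm:theta}) for the graph $G_L$ on $\M(n)/L$. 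Where you differ is in how you would prove positivity of the periodized operator: you appeal to the spectral decomposition of $L^2(\Lambda_0\backslash\M(n))$ (type~I, cocompactness), while the paper needs none of that machinery --- it views $\rho\in L^2(\M(n)/L)$ as an $L$-periodic function, unfolds the integral, truncates to $[-T,T]^n\times\so(n)$, and applies the definition of positive type to the truncations, letting $T\to\infty$; this works as soon as $f$ has bounded support (their substitute for your ``rapid decay''). Note also that passing from operator positivity to the pointwise inequality $\sum_{i,j}K_f(g_i,g_j)\ge N^2\lambda/\vol(\R^n/L)$ at finitely many points uses continuity of the kernel (the Bochner-type lemma quoted in \S\ref{sec:theta}), which is worth stating.

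The one step you leave genuinely unspecified is the final approximation: ``remove the restriction by approximation'' from rapidly decreasing $f$ to a general bounded continuous $f\in L^1(\M(n))$ is not automatic, because the approximants must retain all three hypotheses simultaneously, and a naive truncation of $f$ destroys positive type. The paper's device is to set $g_T(x,A)=\bigl(\vol(B(0,T)\cap B(x,T))/\vol B(0,T)\bigr)f(x,A)$: the multiplier is the normalized autocorrelation of ball indicators, so each matrix $\bigl(g_T((x_j,A_j)^{-1}(x_i,A_i))\bigr)_{i,j}$ is the Hadamard product of a positive semidefinite matrix with a Gram matrix, whence $g_T$ is again continuous of positive type; it has bounded support, keeps condition~(ii) since the multiplier is nonnegative, satisfies $g_T(0,I)=f(0,I)$, and $\lambda_T\to\lambda$ by dominated convergence, which yields the bound for $f$ in the limit. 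Some such positive-definite cutoff (or an equivalent argument) is needed to make your last step rigorous; with it, your outline matches the paper's proof.
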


This theorem is a generalization of a theorem of Cohn and
Elkies~\cite{CohnE} that provides upper bounds for the maximum density of
sphere packings, and more generally also for translational packings of
convex bodies. The theorem of Cohn and Elkies generalizes 
Delsarte's linear programming method. Delsarte (see for example the
survey of Delsarte and Levenshtein \cite{DelsarteL}) gave a
very general method to determine strong upper bounds for packing
problems in compact spaces. The Cohn-Elkies bound deals with the
noncompact, commutative group $\mathbb{R}^n$ and our
bound deals with the noncompact, noncommutative group $M(n)$.

Our theorem can also be seen as an extension of the Lov\'asz theta
number~\cite{Lovasz}, a parameter originally defined for finite graphs, to the
infinite packing graph for the body~$\Kcal$. Our proof of
Theorem~\ref{thm:main} in \S\ref{sec:proof} relies on this connection
and will make it clear.

Finally, applying Theorem~\ref{thm:main} to find upper bounds for the
densities of packings of a given body~$\Kcal \subseteq \R^n$ means
finding a good function~$f$ satisfying the conditions required in the
theorem. In \S\ref{sec:computations}, to find such a function for the
case of pentagon packings, we use a computational approach that relies
on semidefinite programming (see~\S\ref{sec:sdpsos}) and the harmonic
analysis of the Euclidean motion group of the plane (see
\S\ref{sec:harmonic}.  Here is a place where we see that it is simpler
to deal with pentagon packings than with tetrahedra packings, since
the formulas describing the harmonic analysis of~$\M(2)$ are much
simpler, specially from a computational perspective, than those
describing the harmonic analysis of~$\M(3)$.

\section{Proving the main theorem}
\label{sec:proof}

\subsection{Packing density and periodic packings}

To give a proof of Theorem~\ref{thm:main} we need to present some
technical considerations regarding the density of a packing. Here we
follow Appendix~A of Cohn and Elkies~\cite{CohnE}.

Let~$\Kcal \subseteq \R^n$ be a convex body and~$\Pcal$ be a packing
of congruent copies of~$\Kcal$. We say that the \defi{density}
of~$\Pcal$ is~$\Delta$ if for all~$p \in \R^n$ we have
\[
\Delta = \lim_{r \to \infty} \frac{\vol (B(p, r) \cap \Pcal)}{\vol
  B(p, r)},
\]
where~$B(p, r)$ is the ball of radius~$r$ centered at~$p$.  Not every
packing has a density, but every packing has an \defi{upper density}
given by
\[
\limsup_{r \to \infty} \sup_{p \in \R^n} \frac{\vol (B(p, r) \cap \Pcal)}{\vol
  B(p, r)}.
\]

We say that a packing~$\Pcal$ is \defi{periodic} if there is a
lattice\footnote{A \textit{lattice} is a discrete subgroup of~$(\R^n,
  {+})$.}~$L \subseteq \R^n$ that leaves~$\Pcal$ invariant, i.e.,
which is such that~$\Pcal = x + \Pcal$ for all~$x \in L$; here,~$L$ is
the \defi{periodicity lattice} of~$\Pcal$. In other words, an
invariant packing consists of some congruent copies of~$\Kcal$
arranged inside the fundamental cell of~$L$, and this arrangement
repeats itself at each copy of the fundamental cell translated by
vectors of the lattice.

Periodic packings have well-defined densities. Moreover, it is not
hard to prove that given any packing~$\Pcal$, one may define a
sequence of periodic packings whose fundamental cells have volumes
approaching infinity and whose densities converge to the upper density
of~$\Pcal$. So in computing bounds for the packing density of a given
body, one may restrict oneself to periodic packings. This restriction
is particularly interesting because it allows us to compactify the
problem, as we will see later on.

\subsection{A generalization of the Lov\'asz theta number}
\label{sec:theta}

Let~$G = (V, E)$ be an undirected graph without loops\footnote{Like
  all the graphs considered in this paper.}, finite or
infinite. A set~$I \subseteq V$ is \defi{independent} if no two
vertices in~$I$ are adjacent. The \defi{independence number} of~$G$,
denoted by~$\alpha(G)$, is the maximum cardinality of an independent
set of~$G$.

Packings of a given body~$\Kcal \subseteq \R^n$ correspond to the
independent sets of the \defi{packing graph} of~$\Kcal$. This is the
graph whose vertices are the elements of~$\M(n)$. Here, vertex~$(x, A)
\in \M(n)$ corresponds to the congruent copy~$x + A\Kcal$
of~$\Kcal$. Two vertices are adjacent when the corresponding copies
of~$\Kcal$ cannot both be in the packing at the same time, i.e., when
they intersect in their interiors. In other words, distinct
vertices~$(x, A)$ and~$(y, B)$ are adjacent if
\[
(x + A\Kcal^\circ) \cap (y + B\Kcal^\circ) \neq \emptyset.
\]

Clearly, an independent set of the packing graph corresponds to a
packing and vice versa. The packing graph however has infinite
independent sets, and so its independence number is also infinite.

If we consider periodic packings we may manage to work with graphs
that, though infinite, have a compact vertex set and also a finite
independence number. Given a lattice~$L \subseteq \R^n$, write~$\M(n)
/ L = (\R^n / L) \times \so(n)$. Note that this is a compact
set. Here, we assume that the fundamental cell of~$L$ is big enough so
that there exists a nonempty periodic packing with periodicity
lattice~$L$.

Consider the graph~$G_L$ whose vertex set is~$\M(n) / L$ and in which
two distinct vertices~$(x, A)$ and~$(y, B)$ are adjacent if there
is~$v \in L$ such that
\[
(v + x + A\Kcal^\circ) \cap (y + B\Kcal^\circ) \neq \emptyset.
\]
In this setting, a vertex~$(x, A) \in \M(n) / L$ now represents all
bodies~$v + x + A\Kcal$ for~$v \in L$, and we put an edge between two
distinct vertices if any of the corresponding bodies overlap.

So, independent sets of~$G_L$ correspond to periodic packings with
periodicity lattice~$L$ and vice versa. Moreover,~$G_L$ has finite
independence number, and we actually have that the maximum density of
a periodic packing with periodicity lattice~$L$ is equal to
\begin{equation}
\label{eq:periodic-alpha}
\frac{\alpha(G_L)}{\vol(\R^n / L)} \vol\Kcal.
\end{equation}

In view of the fact that we may restrict ourselves to periodic
packings, as seen in the previous section, if we manage to find an
upper bound for~$\alpha(G_L)$ for every~$L$, then we obtain an upper
bound for the maximum density of any packing of~$\Kcal$.

Computing the independence number of a finite graph is an NP-hard
problem, figuring in the list of combinatorial problems proven to be
NP-hard by Karp~\cite{Karp}. Lov\'asz~\cite{Lovasz} introduced a graph
parameter, the theta number, that provides an upper bound for the
independence number of a finite graph and that can be computed in
polynomial time. In Theorem~\ref{thm:theta} we present a
generalization of the theta number to graphs defined over certain
measure spaces, like the graph~$G_L$.

To present our theorem we need first a few definitions and facts from
functional analysis. For background we refer
the reader to the book by Conway~\cite{Conway}.

Let~$V$ be a separable and compact topological space and~$\mu$ be a
finite Borel measure on~$V$ which is such that every nonempty open
subset of~$V$ has nonzero measure. There are many examples of such a
space. For instance, any finite set~$V$ with the counting measure
provides such an example, as does~$\M(n) / L$ with its natural
measure.

A \defi{Hilbert-Schmidt kernel}, or simply a \defi{kernel}, is a
square-integrable function $K\colon V \times V \to \C$. A kernel
defines an operator~$T_K\colon L^2(V) \to L^2(V)$ as follows: for~$f
\in L^2(V)$ and~$x \in V$ we have
\[
(T_K f)(x) = \int_V K(x, y) f(y)\, d\mu(y).
\]

An \defi{eigenfunction} of~$K$ is a nonzero function~$f \in L^2(V)$ such
that~$T_K f = \lambda f$ for some~$\lambda \in \C$. We say
that~$\lambda$ is the \defi{eigenvalue} associated with~$f$.

A kernel~$K$ is \defi{Hermitian} if $K(x, y) = \overline{K(y, x)}$ for
all~$x$, $y \in V$; a Hermitian kernel defines a self-adjoint
operator~$T_K$. We say that~$K$ is \defi{positive} if it is Hermitian
and for all~$\rho \in L^2(V)$ we have
\[
\int_V \int_V K(x, y) \rho(x) \overline{\rho(y)}\, d\mu(x) d\mu(y)
\geq 0.
\]
This is equivalent to~$\langle T_K \rho, \rho\rangle \geq 0$ for
all~$\rho \in L^2(V)$, where
\[
\langle f, g \rangle = \int_V f(x) \overline{g(x)}\, d\mu(x)
\]
is the standard inner product between $f$, $g \in L^2(V)$. Further
still, a Hermitian kernel is positive if and only if all its
eigenvalues are nonnegative.

\begin{theorem}
\label{thm:theta}
Let~$G = (V, E)$ be a graph, where~$V$ is a separable and compact
topological space having a finite Borel measure~$\mu$ such that every
nonempty open set of~$V$ has nonzero measure. Suppose that
kernel~$K\colon V \times V \to \R$ satisfies the following conditions:

\begin{enumerate}
\item[(i)] $K$ is continuous;

\item[(ii)] $K(x, y) \leq 0$ whenever~$x \neq y$ are nonadjacent;

\item[(iii)] $K - J$ is positive, where~$J$ is the constant~$1$ kernel.
\end{enumerate}

\noindent
Then, for any number~$B$ such that~$B \geq K(x, x)$ for all~$x \in V$,
we have~$\alpha(G) \leq B$.
\end{theorem}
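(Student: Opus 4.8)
The plan is to mimic the classical finite-graph argument for the Lov\'asz theta number, using the kernel $K$ to certify the bound on $\alpha(G)$. Let $I\subseteq V$ be an independent set with $|I|=\alpha(G)$ (finite, by hypothesis on $G$), and consider the function $\rho=\sum_{x\in I}\delta_x$—except that point masses are not in $L^2(V)$, so the first technical step is to replace them by an approximation. For each $x\in I$ choose a small open neighborhood $U_x\ni x$, pairwise disjoint, and let $\rho_\varepsilon$ be a suitable normalized combination of indicator functions $\mu(U_x)^{-1}\one_{U_x}$; since every nonempty open set has positive measure, these are legitimate elements of $L^2(V)$. Applying condition (iii), $\langle (T_K-T_J)\rho_\varepsilon,\rho_\varepsilon\rangle\ge 0$, and then shrinking the neighborhoods $U_x$ to points and using continuity of $K$ (condition (i)), one passes to the limit to obtain
\[
\sum_{x,y\in I} K(x,y) \;\ge\; \Big(\sum_{x\in I} 1\Big)^2 \;=\; \alpha(G)^2,
\]
since $J$ contributes $|I|^2=\alpha(G)^2$ in the limit.

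The second step is to bound the left-hand side from above using conditions (ii) and the bound $B$. Because $I$ is independent, any two distinct $x,y\in I$ are nonadjacent, so $K(x,y)\le 0$ by (ii); hence the off-diagonal terms are nonpositive and
\[
\sum_{x,y\in I} K(x,y) \;\le\; \sum_{x\in I} K(x,x) \;\le\; \alpha(G)\, B.
\]
Combining the two displays gives $\alpha(G)^2\le \alpha(G)\,B$, and dividing by $\alpha(G)$ (which we may assume positive, the bound being trivial otherwise) yields $\alpha(G)\le B$, as desired.

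The main obstacle is the first step: making the passage from the ``obvious'' test function $\sum_{x\in I}\delta_x$—which lives in the dual of continuous functions but not in $L^2(V)$—to a genuine $L^2$ approximant, and then controlling the limit. The point is that as the neighborhoods $U_x$ shrink, the diagonal-block contribution of $T_K$ behaves like $\mu(U_x)^{-1}\int_{U_x}\int_{U_x}K(s,t)\,d\mu(s)d\mu(t)\to K(x,x)$ by continuity, and likewise the cross terms converge to $K(x,y)$, while the $J$-part reproduces $|I|^2$ exactly regardless of $\varepsilon$; one must check these limits are uniform enough (only finitely many pairs appear, so this is routine once continuity is invoked on a compact set) to conclude $\sum_{x,y\in I}K(x,y)\ge \alpha(G)^2$ in the limit. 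Separability and compactness of $V$ guarantee we are working with honest Hilbert-Schmidt operators and that the relevant integrals converge. With that approximation argument in hand, the rest is the two-line inequality chain above.
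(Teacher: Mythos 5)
Your proposal is correct and follows essentially the same route as the paper: the paper obtains $\sum_{x,y\in I}K(x,y)\ge |I|^2$ by citing the pointwise characterization of continuous positive kernels (Bochner's lemma, which is exactly what your mollification of point masses by normalized indicator functions re-proves, using compactness, continuity, and the positivity of $\mu$ on nonempty open sets), and the concluding inequality chain $|I|B\ge\sum_{x\in I}K(x,x)\ge\sum_{x,y\in I}K(x,y)\ge|I|^2$ is identical. Only cosmetic repairs are needed: run the argument on an arbitrary nonempty \emph{finite} independent set (the theorem has no hypothesis guaranteeing a finite maximum one exists, and the uniform bound $|I|\le B$ on finite independent sets already gives $\alpha(G)\le B$), and the diagonal-block normalization should read $\mu(U_x)^{-2}\int_{U_x}\int_{U_x}K(s,t)\,d\mu(s)\,d\mu(t)$.
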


Notice that any kernel~$K$ satisfying the conditions of the theorem
provides an upper bound for~$\alpha(G)$. For a finite graph, the
optimal bound given by the theorem is exactly the theta prime number
of the graph~$G$, a strengthening of the theta number introduced
independently by
McEliece, Rodemich, and Rumsey~\cite{McElieceRR} and
Schrijver~\cite{Schrijver}.

Such generalizations of the theta number have been considered before
by Bachoc, Nebe, Oliveira, and Vallentin~\cite{BachocNOV}, where it is
proved that the linear programming bound of Delsarte, Goethals, and
Seidel~\cite{DelsarteGS} for the sizes of spherical codes comes from
such a generalization.

To prove the theorem we need the following alternative
characterization of continuous and positive kernels: A continuous
kernel~$K\colon V \times V \to \C$ is positive if and only if for
all~$m$ and any
choice~$x_1$, \dots,~$x_m$ of points in~$V$ the matrix $\bigl(K(x_i,
x_j)\bigr)_{i,j=1}^m$ is positive semidefinite (cf.~Lemma~1 in
Bochner~\cite{Bochner}). In fact, here is where the hypothesis on~$V$
is used: To prove this, one needs to use the fact that~$V$ is compact,
separable, and that the measure~$\mu$ is finite and nonzero on
nonempty open sets.

\begin{proof}[Proof of Theorem~\ref{thm:theta}]
Let~$I \subseteq V$ be a nonempty finite independent set. Since~$K - J$ is a
continuous and positive kernel, we have that
\[
\sum_{x, y \in I} K(x, y) \geq \sum_{x, y \in I} J(x, y) = |I|^2.
\]
Since~$K$ satisfies condition~(ii), if~$B$ is an upper bound on
the diagonal entries of~$K$ we have that
\[
|I| B \geq \sum_{x \in I} K(x, x) \geq \sum_{x, y \in I} K(x, y) \geq |I|^2,
\]
and then~$B \geq |I|$, as we wanted.
\end{proof}

\subsection{A proof of the main theorem}
\label{sec:main-proof}

The proof of Theorem~\ref{thm:main} is similar to the proof of
Theorem~3.1 in the paper by de Laat, Oliveira, and
Vallentin~\cite{LaatOV}.  We first prove the theorem for functions of
bounded support and then extend it to~$L^1$ functions.

Let~$f\colon \M(n) \to \R$ be a function of bounded support
satisfying conditions~(i)--(iii) in Theorem~\ref{thm:main}. Given a
lattice~$L \subseteq \R^n$ whose fundamental cell is big enough so
that there is a nonempty periodic packing with periodicity
lattice~$L$, we use~$f$ to define a kernel $K_L\colon (\M(n) / L)
\times (\M(n) / L) \to \R$ satisfying conditions~(i)--(iii) of
Theorem~\ref{thm:theta} for the graph~$G_L$, defined in the previous
section.

In fact, we let
\begin{equation}
\label{eq:kernel-def}
\begin{split}
K_L((x, A), (y, B))&= \sum_{v \in L} f((y - v, B)^{-1} (x, A))\\
&=\sum_{v \in L} f(B^{-1}(x - y + v), B^{-1}A)
\end{split}
\end{equation}
for every~$(x, A)$, $(y, B) \in \M(n) / L$.

Since~$f$ has bounded support and~$x$, $y \in \R^n / L$, the sum above
is actually a finite sum. This shows not only that~$K_L$ is
well-defined, but also that it is continuous.

We claim that~$K_L$ has the following properties:

\begin{enumerate}
\item[K1.] it is a positive kernel;

\item[K2.] the constant~$1$ function is an eigenfunction of~$K_L$, with
  eigenvalue~$\lambda$;

\item[K3.] $K_L((x, A), (y, B)) \leq 0$ if~$(x, A) \neq (y, B)$ are
  nonadjacent in~$G_L$;

\item[K4.] $f(0, I) \geq K_L((x, A), (x, A))$ for all~$(x, A) \in \M(n) /
  L$.
\end{enumerate}

Once we have established these properties, it becomes clear that the
kernel
\[
\tilde{K}_L = \frac{\vol(\R^n / L)}{\lambda} K_L
\]
satisfies conditions (i)--(iii) of Theorem~\ref{thm:theta} for the
graph~$G_L$. In particular, the fact that~$\tilde{K}_L - J$ is
positive follows directly from~K1 and~K2 above, because the
constant~$1$ function is an eigenfunction of both~$\tilde{K}_L$
and~$J$ with associated eigenvalue~$\vol(\R^n / L)$ in both cases.

 But then from~K4 we may
take~$B = f(0, I) \vol(\R^n / L) / \lambda$ in Theorem~\ref{thm:theta}
and obtain the bound
\[
\alpha(G_L) \leq f(0, I) \frac{\vol(\R^n / L)}{\lambda}.
\]
So the maximum density of any periodic packing with periodicity
lattice~$L$ is bounded from above by (cf.~equation~\eqref{eq:periodic-alpha})
\[
\frac{\alpha(G_L)}{\vol(\R^n / L)} \vol\Kcal \leq \frac{f(0,
  I)}{\lambda} \vol\Kcal,
\]
and since~$L$ is an arbitrary lattice, we would have a proof of
Theorem~\ref{thm:main}.

So we set out to prove~K1--K4. Property~K1 is implied by the fact
that~$f$ is of positive type. In fact, since~$f(x, A) = \overline{f((x,
A)^{-1})}$, kernel~$K_L$ is Hermitian by construction. Now take a
function~$\rho \in L^2(\M(n) / L)$. We also view~$\rho$ as the
periodic function~$\rho\colon \M(n) \to \C$ such that~$\rho(x + v, A)
= \rho(x, A)$ for all~$v \in L$. For~$T > 0$, write~$\M_T(n) = [-T,
T]^n \times \so(n)$. Then
\[
\begin{split}
&\int_{\M(n)/L} \int_{\M(n)/L} K_L((x, A), (y, B)) \rho(x, A)
  \overline{\rho(y, B)}\, d(y, B) d(x, A)\\
&\qquad = \int_{\M(n) / L} \int_{\M(n) / L} \sum_{v \in L} f((y - v,
  B)^{-1} (x, A)) \rho(x, A) \overline{\rho(y, B)}\, d(y, B) d(x, A).\\
&\qquad =\int_{\M(n) / L} \int_{\M(n)} f((y, B)^{-1} (x, A))
  \overline{\rho(y, B)}\, d(y, B) \rho(x, A)\, d(x, A)\\
&\qquad = \lim_{T \to \infty} \frac{\vol(\R^n / L)}{\vol [-T, T]^n}
  \int_{\M_T(n)} \int_{\M(n)} f((y, B)^{-1} (x, A)) \overline{\rho(y,
    B)}\, d(y, B)\\
&\hskip10cm{}\cdot \rho(x, A)\, d(x, A)\\
&\qquad = \lim_{T \to \infty} \frac{\vol(\R^n / L)}{\vol [-T, T]^n}
  \int_{\M_T(n)} \int_{\M_T(n)} f((y, B)^{-1} (x, A)) \rho(x, A)\overline{\rho(y,
    B)}\\
&\hskip10cm{}\cdot d(y, B) d(x, A)\\
&\qquad\geq 0.
\end{split}
\]

Above, from the second to the third line we exchange the sum with the
innermost integral and use the fact that, if~$h\colon \R^n \to \C$ is
an integrable function, then
\begin{equation}
\label{eq:periodic-sum}
\sum_{v \in L} \int_{\R^n / L} h(x + v)\, dx = \int_{\R^n} h(x)\, dx.
\end{equation}

To go from the third to the fourth line we notice that the function
\[
(x, A) \mapsto \int_{\M(n)} f((y, B)^{-1} (x, A)) \overline{\rho(y,
  B)}\, d(y, B)
\]
is periodic with respect to the lattice~$L$. From the fourth to the
fifth line we use the fact that~$f$ is of bounded support. Finally,
from the fifth to the sixth line we apply directly the definition of a
function of positive type.

To see~K2, we use~\eqref{eq:periodic-sum} and notice that for a
fixed~$(x, A) \in \M(n) / L$ we have
\[
\begin{split}
\int_{\M(n) / L} K_L((x, A), (y, B))\, d(y, B)
&= \int_{\M(n) / L} \sum_{v \in L} f((y - v, B)^{-1} (x, A))\, d(y,
B)\\
&= \int_{\M(n)} f((y, B)^{-1} (x, A))\, d(y, B)\\
&= \lambda.
\end{split}
\]

To prove~K3, recall that~$(x, A)$, $(y, B) \in \M(n) / L$ are
nonadjacent if for all~$v \in L$ we have~$(v + x + A\Kcal^\circ) \cap
(y + B\Kcal^\circ) = \emptyset$, and this is the case if and only
if
\[
\Kcal^\circ \cap (B^{-1}(x - y + v) + B^{-1} A \Kcal^\circ) =
\emptyset.
\]
But then, since~$f$ satisfies~(ii) in the statement of
Theorem~\ref{thm:main}, every summand in~\eqref{eq:kernel-def} will be
nonpositive, implying~K3.

Property K4 may be similarly proven. In fact, since from start
we assumed~$L$ has a large enough fundamental cell, for~$v \in L$
with~$v \neq 0$ we have~$\Kcal^\circ \cap (A^{-1} v +
\Kcal^\circ) = \emptyset$. But then in
expression~\eqref{eq:kernel-def} for~$K_L((x, A), (x, A))$, all
summands but the one for~$v = 0$ will be nonpositive, and the summand
for~$v = 0$ is exactly~$f(0, I)$, proving~K4.

So we have K1--K4, and Theorem~\ref{thm:main} follows for
functions~$f$ of bounded support. To prove the theorem for a
given~$L^1$ function, we approximate it by functions of bounded
support as follows.

Let~$f \in L^1(\M(n))$ be a real-valued function
satisfying conditions (i)--(iii)  in Theorem~\ref{thm:main}. For~$T >
0$, consider the function~$g_T\colon \M(n) \to \R$ given by
\[
g_T(x, A) = \frac{\vol(B(0, T) \cap B(x, T))}{\vol B(0, T)} f(x, A),
\]
where~$B(x, T)$ is the ball of radius~$T$ centered at~$x$.

Clearly, $g_T$ is continuous and has bounded support. We claim that it
is also a function of positive type.

To see this, we will use a characterization of continuous functions of
positive type analogous to the characterization of continuous
positive kernels given in \S\ref{sec:theta}, namely: A continuous
function~$f \in L^\infty(\M(n))$ is of positive type if and only if
the matrix
\[
\bigl(f((x_j, A_j)^{-1} (x_i, A_i))\bigr)_{i,j=1}^m
\]
is positive semidefinite for any~$m$ and any elements~$(x_1, A_1)$,
\dots,~$(x_m, A_m) \in \M(n)$ (cf.~Folland~\cite{Folland},
Proposition~3.35).

Let~$(x_1, A_1)$, \dots,~$(x_m, A_m) \in \M(n)$ be any given
elements. Let~$\chi_i\colon \R^n \to \{0, 1\}$ be the characteristic
function of~$B(x_i, T)$ and denote by~$\langle f, g \rangle$ the
standard inner product between functions~$f$, $g \in L^2(\R^n)$. Then
\[
\begin{split}
g_T((x_j, A_j)^{-1} (x_i, A_i)) 
&= g_T(A_j^{-1} (x_i - x_j), A_j^{-1} A_i)\\
&= \frac{\vol(B(0, T) \cap B(A_j^{-1} (x_i - x_j), T))}{\vol B(0, T)}
   f((x_j, A_j)^{-1} (x_i, A_i))\\
&=\frac{\vol(B(x_i, T) \cap B(x_j, T))}{\vol B(0, T)}
   f((x_j, A_j)^{-1} (x_i, A_i))\\
&=\frac{\langle \chi_i, \chi_j\rangle}{\vol B(0, T)} f((x_j, A_j)^{-1}
   (x_i, A_i)).
\end{split}
\]

This shows that the matrix
\begin{equation}
\label{eq:gt-matrix}
\bigl(g_T((x_j, A_j)^{-1} (x_i, A_i))\bigr)_{i,j=1}^m
\end{equation}
is the Hadamard (entrywise) product of the matrices
\[
\bigl(f((x_j, A_j)^{-1} (x_i, A_i))\bigr)_{i,j=1}^m\qquad
\text{and}\qquad
\frac{1}{\vol B(0, T)} \bigl(\langle \chi_i,
\chi_j\rangle\bigr)_{i,j=1}^m.
\]
The first matrix above is positive semidefinite since~$f$ is of
positive type. The second matrix is positive semidefinite since it is
a positive multiple of the Gram matrix of vectors~$\chi_1$,
\dots,~$\chi_m$. So we have that~\eqref{eq:gt-matrix} is positive
semidefinite, and thus~$g_T$ is of positive type.

By construction, whenever~$f(x, A) \leq 0$, also~$g_T(x, A) \leq
0$. So~$g_T$ is a continuous function of bounded support satisfying
conditions~(i) and~(ii) from the statement of
Theorem~\ref{thm:main}. This implies immediately that
\begin{equation}
\label{eq:gt-bound}
\frac{g_T(0, I)}{\lambda_T} \vol\Kcal = \frac{f(0, I)}{\lambda_T}
\vol\Kcal
\end{equation}
is an upper bound for the density of any packing of congruent copies
of~$\Kcal$, where
\[
\lambda_T = \int_{\M(n)} g_T(x, A)\, d(x, A).
\]

To finish, notice that~$g_T$ converges pointwise to~$f$ as~$T \to
\infty$. Moreover, for all~$T$ we have~$|g_T(x, A)| \leq |f(x, A)|$. So
it follows from Lebesgue's dominated convergence theorem
that~$\lambda_T \to \lambda$ as~$T \to \infty$. This together
with~\eqref{eq:gt-bound} finishes the proof of Theorem~\ref{thm:main}.

\subsection{Using the symmetry of the body}

Let~$\Kcal \subseteq \R^n$ be a convex body. Its \defi{symmetry group}
is the subgroup of~$\so(n)$ defined as
\[
\Sym(\Kcal) = \{\, A \in \so(n) : A\Kcal = \Kcal\,\}.
\]

The action by conjugation of an element~$B \in \so(n)$ on a function~$f
\in L^1(\M(n))$ is given by
\[
(B \cdot f)(x, A) = f((0, B) (x, A) (0, B)^{-1}).
\]

Suppose now~$G$ is a compact subgroup of~$\Sym(\Kcal)$. Then in
Theorem~\ref{thm:main} we may restrict ourselves to $G$-invariant
functions~$f \in L^1(\M(n))$ without affecting the bound
obtained. Here we say that~$f$ is \defi{$G$-invariant} if~$B \cdot f =
f$ for all~$B \in G$.

This restriction to $G$-invariant functions may make it easier to
apply Theorem~\ref{thm:main}. This is actually the case for our
application to pentagon packings, as we will see in~\S\ref{sec:computations}.

To see that the restriction to $G$-invariant functions does not affect
the bound that can be obtained from Theorem~\ref{thm:main}, notice
that, if~$f \in L^1(\M(n))$ is a bounded continuous function satisfying
conditions (i)--(iii) of Theorem~\ref{thm:main}, then also~$B\cdot f$,
for~$B \in G$, satisfies these conditions.

In fact, to show that~$B \cdot f$ is of positive type, let~$(x_1,
A_1)$, \dots,~$(x_m, A_m) \in \M(n)$. Then
\[
\begin{split}
& \bigl((B \cdot f)((x_j, A_j)^{-1} (x_i, A_i))\bigr)_{i,j=1}^m\\
= \; & \bigl(f((0, B) (x_j, A_j)^{-1} (x_i, A_i) (0,
B)^{-1})\bigr)_{i,j=1}^m\\
= \; & \bigl(f(((x_j, A_j)(0, B)^{-1})^{-1} ((x_i, A_i)(0, B)^{-1}))\bigr)_{i,j=1}^m,
\end{split}
\]
and since~$f$ is of positive type,~$B \cdot f$ is also of positive
type (cf.\ the alternative characterization of continuous functions of
positive type in the previous section).

To see that~$B\cdot f$ satisfies condition~(ii) of
Theorem~\ref{thm:main}, notice that, since~$B^{-1} \Kcal =
\Kcal$, we have~$\Kcal^\circ \cap (x + A \Kcal^\circ) = \emptyset$ if
and only if $\Kcal^\circ \cap (Bx + B A B^{-1} \Kcal^\circ) =
\emptyset$.

Finally, we have
\[
\int_{\M(n)} (B \cdot f)(x, A)\, d(x, A) =
\int_{\M(n)} f(x, A)\, d(x, A),
\]
and so we see that~$B\cdot f$ satisfies the conditions of
Theorem~\ref{thm:main} and provides the same bound as~$f$. 

Now, since~$G$ is compact, it admits a Haar measure~$\mu$ which we
normalize so that~$\mu(G) = 1$. Then it is immediate that the
function~$\overline{f} \in L^1(\M(n))$ such that
\[
\overline{f}(x, A) = \int_G (B \cdot f)(x, A)\, d\mu(B)
\]
satisfies (i)--(iii) of Theorem~\ref{thm:main} and provides the same
bound as~$f$. Moreover,~$\overline{f}$ is $G$-invariant. So it follows
that a restriction to $G$-invariant functions does not affect the
bound of Theorem~\ref{thm:main}.

\section{Semidefinite programming and sums of squares}
\label{sec:sdpsos}

We collect here the basic facts we need from semidefinite
programming. For further background we refer to the book by Ben-Tal
and Nemirovski~\cite{BentalN}.

A linear programming problem amounts to maximizing a linear
function over a polyhedron, which is the intersection of the nonnegative
orthant $\R^n_{\geq 0}$ with an affine subspace. A semidefinite
programming problem --- a rich generalization of linear
programming --- amounts to maximizing a linear function over a
spectrahedron, the intersection of the cone of positive semidefinite
matrices $\mathcal{S}^n_{\succeq 0}$ with an affine subspace.  A
semidefinite programming problem in primal standard form is
\[
\sup\left\{\,\langle C, X \rangle : X \in \mathcal{S}^n_{\succeq 0},\;
  \langle A_j, X \rangle = b_j,\; j = 1, \ldots, m\,\right\},
\]
where $C$, $A_1$, \dots,~$A_m$ are given $n
\times n$ matrices and where $b_1$, \dots,~$b_m \in \C$. Here
$\langle A, B \rangle = \tr(B^* A)$ denotes the trace inner product
between matrices. Matrices~$C$ and~$A_i$ are usually required to be
symmetric (or Hermitian). The seemingly more general setting used here
can be easily reduced to this restricted version though.

Semidefinite programming problems are conic optimization
problems. Sometimes it is convenient to assume that the variable
matrix~$X$ has block-diagonal structure, which amounts to changing the
cone~$\mathcal{S}^n_{\succeq 0}$ to the direct product
$\mathcal{S}^{n_1}_{\succeq 0} \times \cdots \times
\mathcal{S}^{n_k}_{\succeq 0}$.
For solving semidefinite programming problems two types of algorithms
are available: the ellipsoid method and interior point methods. The
ellipsoid method focuses on the existence of polynomial-time
algorithms but no practical implementation is available. In contrast
to this there are many very good implementations of interior point
methods; De Klerk and Vallentin showed in \cite{KlerkV} that a
variant of the interior point method for semidefinite programming
can run in polynomial time on the Turing machine model.

Semidefinite programming is specially useful for certifying
the nonnegativity of polynomials or of trigonometric polynomials via
sums of squares. We quickly discuss the univariate case
--- the multivariate case is a simple extension.

A univariate polynomial $p \in \mathbb{R}[x]$
of degree $2d$ is a sum of squares,
i.e., it can be written as
\[
p = h_1^2 + \cdots + h_r^2 \quad \text{for some $r \in \mathbb{N}$ and
  $h_1, \ldots, h_r \in \R[x]$ of degree at most $d$}
\]
if and only if there is a positive semidefinite matrix $Q$ with
\[
p = \langle V, Q \rangle,
\]
where~$V$ is a matrix of polynomials such that~$V_{kl} = P_k(x)
P_l(x)$ for some basis~$P_k$ of the space of polynomials of degree at
most~$d$.

Note $p = \langle V, Q \rangle$ is an
identity between polynomials. One can check it by linear
equalities --- equating the coefficients --- once one writes both sides
in terms of some basis.

If a polynomial can be written as a sum of squares, then it is clearly
nonnegative. For univariate polynomials, the converse is also
true. This is not the case in general, however; Laurent~\cite{Laurent}
presents a survey.

A similar approach can be applied to trigonometric polynomials. Such
is an expression of the sort
\[
p(\theta) = \sum_{k=-n}^n c_k e^{ik\theta},
\]
where~$c_k = \overline{c_{-k}}$. One way to certify that this
trigonometric polynomial is nonnegative for all~$\theta$ is to write
it as a sum of squares, that is, to write it as
\[
p(\theta) = |h_1(e^{i\theta})|^2 + \cdots + |h_r(e^{i\theta})|^2
\]
for some number~$r$ and some univariate polynomials~$h_1$,
\dots,~$h_r$. Now, being a sum of squares is equivalent to the
existence of an~$(n+1) \times (n+1)$ positive semidefinite matrix~$Q$
such that
\[
p(\theta) = \langle V(e^{i\theta}), Q \rangle,
\]
where~$V$ is the matrix with~$V_{kl}(z) = z^{k-l}$.

\section{Harmonic analysis on $\M(2)$}
\label{sec:harmonic}

Our approach to apply Theorem~\ref{thm:main} in order to obtain upper
bounds for the maximum density of pentagon packings is to specify the
function~$f$ via its Fourier transform. So here we quickly present
the facts from the theory of harmonic analysis on~$\M(2)$ that we will
use. We follow Sugiura~\cite{Sugiura} closely, though we deviate at
some points, mainly concerning choices of normalization, as we will
see.

For $x$, $y \in \R^2$, denote by~$x \cdot y = x_1 y_1 + x_2 y_2$ the
Euclidean inner product. Let~$S^1$ be the unit circle and for~$\varphi$,
$\psi \in L^2(S^1)$ denote by~$\langle \varphi, \psi\rangle$ the
standard inner product between~$\varphi$ and~$\psi$, i.e.,
\[
\langle\varphi, \psi\rangle = \frac{1}{\omega(S^1)} \int_{S^1}
\varphi(\xi) \overline{\psi(\xi)}\, d\omega(\xi),
\]
where~$\omega$ is the Lebesgue measure on the unit circle.

For~$a \geq 0$ and~$(x, A) \in \M(2)$, consider the operator~$U^a_{(x,
  A)}\colon L^2(S^1) \to L^2(S^1)$ defined as follows: For~$\varphi
\in L^2(S^1)$ we have
\[
[U^a_{(x, A)}\varphi](\xi) = e^{2\pi i a x \cdot \xi} \varphi(A^{-1}
\xi)
\]
for all~$\xi \in S^1$. (In the definition of~$U^a_{(x, A)}$ we differ
from Sugiura~\cite{Sugiura}, who omits the factor~$2\pi$, which we
include to obtain better formulas --- from a computational point of
view --- later on. Besides changing some normalization parameters, this
does not affect the theory as presented by Sugiura.)

Operator~$U^a_{(x, A)}$ is a bounded and unitary operator. Moreover,
one can easily check that 
\[
U^a_{(x, A)(y, B)} = U^a_{(x, A)} U^a_{(y, B)}
\]
for all~$(x, A)$, $(y, B) \in \M(2)$. So the strongly continuous
map~$\rho_a(x, A) = U^a_{(x, A)}$ provides a representation of~$\M(2)$
for every~$a \geq 0$. Representations~$\rho_a$, for~$a > 0$, are
all irreducible and pairwise nonequivalent.

Given a function~$f \in L^1(\M(2))$, its Fourier transform
at~$a \geq 0$ is the bounded
operator~$\widehat{f}(a)\colon L^2(S^1) \to L^2(S^1)$ defined as
\[
\widehat{f}(a) = \int_{\M(2)} f(x, A) U^a_{(x, A)^{-1}}\, d(x, A).
\]

Having defined the Fourier transform of~$f$, we would like to have
an \textit{inversion formula}, that is, a way to compute~$f$ back from
its Fourier transform. In our case the inversion formula takes the
following shape:
\begin{equation}
\label{eq:inv-formula}
f(x, A) = 2\pi \int_0^\infty \tr(U^a_{(x, A)} \widehat{f}(a)) a\, da,
\end{equation}
where $\tr F$ is the trace of a trace-class operator~$F$.  In the
following, we only need positive trace-class operators. We define them
now briefly, and we refer e.g.\ to Conway~\cite{Conway} or
Folland~\cite{Folland} for further information. A positive bounded
operator $F \colon L^2(S^1) \to L^2(S^1)$ is called
\textit{trace-class} if there is a complete orthonormal system
$\varphi_k$ consisting of eigenfunctions of $F$ with eigenvalues
$\lambda_k \geq 0$ and $\sum_k \lambda_k < \infty$. Then the trace of
$F$ is
$\tr(F) = \sum_k \lambda_k = \sum_k \langle F\varphi_k, \varphi_k
\rangle$.

In \eqref{eq:inv-formula} we again deviate slightly from the
exposition of Sugiura. The extra factor of~$2\pi$ in the above formula
as compared to Theorem~3.1 in his book~\cite{Sugiura} follows from the
different normalization he uses for the measure of~$\M(2)$.

Of course, it is not always the case that the inversion formula holds
or converges everywhere. In the book by Sugiura it is shown that the
inversion formula holds for \textit{rapidly decreasing functions} (see
Definition~3 in Chapter~IV, \S3 in Sugiura~\cite{Sugiura}).

We will provide explicit formulas for the Fourier transform and hence
obtain explicit formulas for~$f$. In this way it will be clear in our
application that~$f$ is continuous and~$L^1$. To ensure that~$f$ is of
positive type, the following lemma will be useful. It shows that one
can parametrize positive type functions by the positivity of the
Fourier transform $\widehat{f}$. Here it shows that the computations
needed for applying Theorem~\ref{thm:main} are much more complicated
than those for applying the Cohn-Elkies bound. For the Euclidean
motion group $M(n)$ the Fourier transform of a positive type function
is positive trace-class-operator-valued, whereas for the
translation group $\R^n$ its values are simply nonnegative real
numbers.

\begin{lemma}
\label{lem:pos-type}
Suppose that for each~$a \geq 0$ we have that~$\widehat{f}(a)$ is a
positive, trace-class operator. Then, if the
function~$f$ defined in~\eqref{eq:inv-formula} is bounded and
continuous, it is of positive type.
\end{lemma}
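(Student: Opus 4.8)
The plan is to verify the defining inequality for a function of positive type directly from the inversion formula~\eqref{eq:inv-formula}, by pushing the positivity of each~$\widehat{f}(a)$ through the integral. First I would fix an arbitrary~$\rho \in L^1(\M(2))$ and write the double integral
\[
\int_{\M(2)} \int_{\M(2)} f((y, B)^{-1} (x, A)) \rho(x, A) \overline{\rho(y, B)}\, d(y, B)\, d(x, A),
\]
substitute the inversion formula for~$f((y,B)^{-1}(x,A))$, and use the representation property~$U^a_{(y,B)^{-1}(x,A)} = U^a_{(y,B)^{-1}} U^a_{(x,A)} = (U^a_{(y,B)})^* U^a_{(x,A)}$ together with linearity and cyclicity of the trace to rewrite the inner trace as
\[
\tr\bigl((U^a_{(y,B)})^* U^a_{(x,A)} \widehat{f}(a)\bigr).
\]
I would also need the symmetry~$f(x,A)=\overline{f((x,A)^{-1})}$, which should follow from the hypothesis that each~$\widehat{f}(a)$ is Hermitian (positive) together with the inversion formula; this handles the first requirement in the definition of positive type.

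The heart of the argument is then a Fubini/Tonelli step: assuming the exchange of the~$a$-integral with the two~$\M(2)$-integrals is justified, the quantity above becomes
\[
2\pi \int_0^\infty \tr\Bigl(\widehat{f}(a)\, \Phi(a)^* \Phi(a)\Bigr)\, a\, da,
\]
where~$\Phi(a) = \int_{\M(2)} \rho(x,A)\, U^a_{(x,A)}\, d(x,A)$ is an operator on~$L^2(S^1)$ (roughly the Fourier transform of~$\overline{\rho}$, up to inverse/conjugation bookkeeping). Since~$\rho \in L^1$ and each~$U^a_{(x,A)}$ is unitary, $\Phi(a)$ is a well-defined bounded operator with~$\|\Phi(a)\| \le \|\rho\|_1$, so~$\Phi(a)^*\Phi(a)$ is bounded and positive. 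Because~$\widehat{f}(a)$ is positive and trace-class (hence also Hilbert--Schmidt), one has~$\tr(\widehat{f}(a)\,\Phi(a)^*\Phi(a)) \ge 0$: write~$\widehat{f}(a) = S^*S$ for a Hilbert--Schmidt operator~$S$ (square root of the positive trace-class operator), so the trace equals~$\tr\bigl((S\,\Phi(a)^*)(S\,\Phi(a)^*)^*\bigr) = \|S\,\Phi(a)^*\|_{\mathrm{HS}}^2 \ge 0$. Integrating a nonnegative integrand against~$a\,da$ gives the desired inequality~$\ge 0$.

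The main obstacle is the interchange of integrals, i.e.\ justifying Fubini for a triple integral where the~$a$-integrand is operator-valued and only the boundedness and continuity of~$f$ is assumed a priori. To make this rigorous I would argue as follows: since~$\rho \in L^1(\M(2))$ and~$f$ is bounded, the double integral over~$\M(2) \times \M(2)$ is absolutely convergent; the subtlety is only whether it equals the iterated~$a$-integral of the trace. One clean way is to first prove the identity for~$\rho$ in a dense, well-behaved subclass — for instance, $\rho$ continuous with compact support (or, on the Fourier side, $\rho$ such that~$\Phi(a)$ is supported in a bounded range of~$a$) — where the triple integral is genuinely absolutely convergent and Fubini applies without fuss, so the inequality holds there; then I would pass to general~$\rho \in L^1$ by density, using that both sides of the inequality
\[
\int_{\M(2)} \int_{\M(2)} f((y, B)^{-1} (x, A)) \rho(x, A) \overline{\rho(y, B)}\, d(y, B)\, d(x, A) \ge 0
\]
are continuous in~$\rho$ with respect to the~$L^1$-norm (the left side is bounded in absolute value by~$\|f\|_\infty \|\rho\|_1^2$, and is in fact a continuous quadratic form in~$\rho$). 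Alternatively, one can invoke the matrix characterization of positive type cited just before Lemma~\ref{lem:pos-type} (from Folland~\cite{Folland}): it suffices to check that~$\bigl(f((x_j,A_j)^{-1}(x_i,A_i))\bigr)_{i,j=1}^m$ is positive semidefinite for finitely many points, which reduces to the same trace computation with~$\Phi(a)$ replaced by the finite sum~$\sum_i c_i U^a_{(x_i,A_i)}$ and makes the interchange a finite sum — no Fubini needed at all. I would adopt this finite-dimensional route as the primary argument, since it sidesteps the convergence issue entirely, and only fall back on the~$L^1$ density argument if the matrix characterization's hypotheses need re-checking for~$\M(2)$.
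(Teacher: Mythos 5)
Your primary argument --- invoking the Folland matrix characterization of continuous functions of positive type, reducing positive semidefiniteness of $\bigl(f((x_j,A_j)^{-1}(x_i,A_i))\bigr)_{i,j}$ via the inversion formula to positive semidefiniteness of the per-$a$ trace matrices, and then using unitarity of $U^a_{(x,A)}$ together with the square root of the positive trace-class operator $\widehat{f}(a)$ --- is exactly the paper's proof; your phrasing via cyclicity of the trace and the Hilbert--Schmidt norm is just a cosmetic variant of the paper's computation with a complete orthonormal system and Gram matrices. The correct observation that the finite-point route makes the interchange with the $a$-integral a finite sum (so no Fubini issue) is likewise the implicit step in the paper, so the proposal is correct and essentially identical in approach.
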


\begin{proof}
Take~$(x_1, A_1)$, \dots,~$(x_m, A_m) \in \M(2)$. Recalling the
alternative characterization of continuous functions of positive type
given in \S\ref{sec:main-proof}, we show that the matrix
\[
\bigl(f((x_j, A_j)^{-1} (x_i, A_i))\bigr)_{i,j=1}^m
\]
is positive semidefinite.

By construction, this is a Hermitian
matrix. From~\eqref{eq:inv-formula}, to prove it is positive
semidefinite it suffices to show that for all~$a \geq 0$ the matrix
\begin{equation}
\label{eq:mat-inv}
\bigl(\tr(U^a_{(x_j, A_j)^{-1} (x_i, A_i)}
\widehat{f}(a))\bigr)_{i,j=1}^m
\end{equation}
is positive semidefinite.

Notice that since each~$\widehat{f}(a)$ is trace-class, and
since~$U^a_{(x, A)}$ is a bounded operator, then~$U^a_{(x, A)}
\widehat{f}(a)$ is trace-class for all~$(x, A) \in \M(2)$, and so each
entry of~\eqref{eq:mat-inv} is well-defined.

To see that~\eqref{eq:mat-inv} is positive semidefinite,
let~$\varphi_1$, $\varphi_2$, \dots\ be a complete orthonormal system
of~$L^2(S^1)$. For~$i$, $j = 1$, \dots,~$m$ we have
\[
\begin{split}
\tr(U^a_{(x_j, A_j)^{-1} (x_i, A_i)} \widehat{f}(a))
&= \sum_{k=1}^\infty \langle U^a_{(x_j, A_j)^{-1} (x_i, A_i)}
\widehat{f}(a) \varphi_k, \varphi_k \rangle\\
&= \sum_{k=1}^\infty \langle U^a_{(x_j, A_j)^{-1} (x_i, A_i)}
\widehat{f}(a) U^a_{(x_j, A_j)^{-1}}\varphi_k, U^a_{(x_j,
  A_j)^{-1}}\varphi_k \rangle\\
&= \sum_{k=1}^\infty \langle U^a_{(x_i, A_i)}
\widehat{f}(a) U^a_{(x_j, A_j)^{-1}}\varphi_k, \varphi_k \rangle\\
&= \sum_{k=1}^\infty \langle \widehat{f}(a) U^a_{(x_j,
    A_j)^{-1}}\varphi_k, U^a_{(x_i, A_i)^{-1}}\varphi_k \rangle\\
&= \sum_{k=1}^\infty \langle \widehat{f}^{1/2}(a) U^a_{(x_j,
    A_j)^{-1}}\varphi_k, \widehat{f}^{1/2}(a) U^a_{(x_i, A_i)^{-1}}\varphi_k \rangle.
\end{split}
\]
Here, we go from the first to the second line by noticing that,
since~$U^a_{(x_j, A_j)^{-1}}$ is a unitary operator, $U^a_{(x_j,
  A_j)^{-1}} \varphi_1$, $U^a_{(x_j, A_j)^{-1}} \varphi_2$, \dots\ is
also a complete orthonormal system of~$L^2(S^1)$. Finally, from the
fourth to the fifth line, we observe that since~$\widehat{f}(a)$ is a
positive trace-class operator, it has a
square-root~$\widehat{f}^{1/2}(a)$, a self-adjoint operator such
that~$\widehat{f}(a) = \widehat{f}^{1/2}(a) \widehat{f}^{1/2}(a)$.

So we see that~\eqref{eq:mat-inv} is a sum of positive semidefinite
matrices, the $k$th summand being the Gram matrix of~$m$ vectors, and
we are done.
\end{proof}

We finish this section by computing a more explicit formula for the
inverse transform. We identify both~$\so(2)$ and~$S^1$ with the
torus~$\R / (2\pi\Z)$, and by an abuse of language with the
interval~$[0, 2\pi]$. We equip~$L^2([0, 2\pi])$ with the inner product
\[
\langle \varphi, \psi \rangle = \frac{1}{2\pi} \int_0^{2\pi}
\varphi(\xi) \overline{\psi(\xi)}\, d\xi
\]
for $\varphi$, $\psi \in L^2([0, 2\pi])$. Then the functions~$\chi_r
\in L^2([0, 2\pi])$, for~$r \in \Z$, defined as~$\chi_r(\xi) = e^{i r
  \xi}$ provide a complete orthonormal system of~$L^2([0, 2\pi])$.

We define the \defi{matrix coefficients} of the operator~$U^a_{(x, A)}$ on the
basis~$\chi_r$ as
\[
u^a_{r,s}(x, A) = \langle U^a_{(x, A)} \chi_s,
\chi_r\rangle\quad\text{with~$r$, $s \in \Z$}.
\]
To compute this, we express~$x$ in polar coordinates as
\[x = \rho
(\cos\theta, \sin\theta)
\]
and we see~$A$ as the rotation matrix
\begin{equation}
\label{eq:A-alpha}
A = A(\alpha) = \begin{pmatrix}
\cos\alpha&-\sin\alpha\\
\sin\alpha&\cos\alpha
\end{pmatrix},
\end{equation}
which rotates vectors counter-clockwise by an angle of~$\alpha$. Then
\begin{equation}
\label{eq:u-coeffs}
\begin{split}
u^a_{r,s}(\rho, \theta, \alpha)
&= \langle U^a_{(x, A)} \chi_s, \chi_r\rangle \\
&= \frac{1}{2\pi} \int_0^{2\pi} [U^a_{(x, A)} \chi_s](\xi)
\overline{\chi_r(\xi)}\, d\xi\\
&=\frac{1}{2\pi} \int_0^{2\pi} e^{2\pi i a \rho (\cos \theta,
  \sin\theta) \cdot (\cos \xi, \sin \xi)} e^{is(\xi-\alpha)}
e^{-ir\xi}\, d\xi\\
&=\frac{1}{2\pi} e^{-i s \alpha} \int_0^{2\pi} e^{2\pi i a \rho
  \cos(\xi - \theta)} e^{i(s - r)\xi}\, d\xi\\
&=\frac{1}{2\pi} e^{-is\alpha} \int_0^{2\pi} e^{2\pi i a \rho\cos\xi}
e^{i(s - r)(\xi + \theta)}\, d\xi\\
&=\frac{1}{2\pi} e^{-i(s\alpha + (r - s)\theta)} \int_0^{2\pi}
e^{i(s-r)\xi} e^{2\pi i a \rho\cos \xi}\, d\xi\\
&=i^{s - r} e^{-i(s\alpha + (r - s)\theta)} J_{s - r}(2\pi a\rho).
\end{split}
\end{equation}
Here,~$J_n(z)$ is the Bessel function of parameter~$n$. To obtain the
last line, we apply Bessel's integral (cf.~Watson~\cite{Watson},
(1)~in Chapter~II, \S2.2).

We may then rewrite~\eqref{eq:inv-formula} by expressing the
operators~$\widehat{f}(a)$ on the basis~$\chi_r$, for~$r \in \Z$. This
gives us
\begin{equation}
\label{eq:f-formula}
\begin{split}
f(\rho, \theta, \alpha)& = \int_0^\infty \sum_{r,s\in \Z}
\widehat{f}(a)_{r,s} u^a_{r,s}(\rho, \theta, \alpha) a\, da\\
&= \int_0^\infty \sum_{r,s\in \Z}
\widehat{f}(a)_{r,s} i^{s - r} e^{-i(s\alpha + (r - s)\theta)} J_{s -
  r}(2\pi a\rho) a\, da.
\end{split}
\end{equation}

\section{Computations for pentagon packings}
\label{sec:computations}

In this section we present a semidefinite programming problem and show
how from its solution a function~$f$ can be derived that satisfies
conditions (i)--(iii) of Theorem~\ref{thm:main} when~$\Kcal$ is a
regular pentagon. We describe the semidefinite programming problem in
detail, and then discuss how it can be solved with the computer and
how a function~$f$ can be obtained from its solution that provides the
bound of~$0.98103$ for the maximum density of packings of regular
pentagons in~$\R^2$.

Throughout this section, $\Kcal$ will denote the regular pentagon
on~$\R^2$ whose vertices are the points
\[
\frac{1}{2} (\cos (k 2\pi / 5), \sin(k 2\pi /5))\quad
\text{for~$k = 0$, \dots,~$4$}.
\]
Note the circumscribed circle of~$\Kcal$ has radius~$1/2$.

The symmetry group of~$\Kcal$ is isomorphic to~$C_5$, the cyclic group
of order~$5$. It consists of the rotation matrices~$A(k 2\pi / 5)$,
for~$k = 0$, \dots,~$4$, where~$A(\alpha)$ is
given in~\eqref{eq:A-alpha}.

\subsection{Specifying the function}

Our approach is to specify the function~$f$ required by
Theorem~\ref{thm:main} via its Fourier transform. In this section
we discuss our choice for the Fourier transform of~$f$, give
explicit formulas for~$f$ in terms of its transform, and
show which constraints must be imposed on the transform so
that~$f$ is a real-valued,~$L^1$ and continuous function of positive
type which is~$\Sym(\Kcal)$-invariant.

Let~$N > 0$ be an integer and~$d \geq 1$ be an odd integer. Consider
the matrix-valued function~$\varphi$ given by
\begin{equation}
\label{eq:phi-def}
\varphi(a) = \bigl(\varphi_{r,s}(a)\bigr)_{r,s=-N}^N
= \biggl(\sum_{k=0}^d f_{r,s;k} a^{2k}\biggr)_{r,s=-N}^N.
\end{equation}
Notice that each~$\varphi(a)$ is a $(2N + 1) \times (2N + 1)$ matrix
whose entries are even univariate polynomials in the variable~$a$.

We define~$f$ as the function whose Fourier transform is
\begin{equation}
\label{eq:fhat-def}
\widehat{f}(a) = \varphi(a) e^{-\pi a^2}.
\end{equation}
Note that we express the operator~$\widehat{f}(a)$ in the
basis~$\chi_r$ for~$r \in \Z$, as discussed in
\S\ref{sec:harmonic}. Clearly, each~$\fh(a)$ is a trace-class operator. In fact, each~$\fh(a)$ has finite rank.

The reason for our choice for the Fourier transform is that it
makes it easy to compute the function~$f$. Let
\[
C_{r,s;k}(\rho) = \frac{\Gamma(k + 1 + |r - s| / 2)
  (\rho\sqrt{\pi})^{|r - s|}}{2 \pi^{k+1} \Gamma(|r-s| + 1)}.
\]
Then using (4.11.24) in Andrews, Askey, and Roy~\cite{AndrewsAR}, 
since~$J_n(z) = (-1)^n J_{-n}(z)$, we have
\[
\begin{split}
\int_0^\infty a^{2k+1} e^{-\pi a^2} J_{s - r}(2\pi a \rho)\, da
&= (-1)^{s - r} \int_0^\infty a^{2k+1} e^{-\pi a^2} J_{|r-s|}(2\pi
a\rho)\, da\\
&=(-1)^{s-r} C_{r,s;k}(\rho)\, {}_1 F_1\biggl({|r-s|/2 - k\atop |r-s|+1};
\pi\rho^2\biggr) e^{-\pi\rho^2},
\end{split}
\]
where~${}_1 F_1$ is the hypergeometric series.
Together with~\eqref{eq:f-formula} and~\eqref{eq:phi-def}, this
implies for~$f$ the formula
\begin{multline}
\label{eq:f-hypergeom}
f(\rho, \theta, \alpha) = 
\sum_{r,s=-N}^N \sum_{k=0}^d (-1)^{s-r} i^{s-r} e^{-i(s\alpha +
  (r-s)\theta)}
f_{r,s;k} C_{r,s;k}(\rho)\\
{}\cdot {}_1 F_1\biggl({|r-s|/2 - k\atop |r-s|+1};
\pi\rho^2\biggr) e^{-\pi\rho^2},
\end{multline}
where $(\rho, \theta, \alpha)$ parametrizes an element of~$\M(2)$ as
in \S\ref{sec:harmonic}.

It is immediately clear that, thanks to our choice of Fourier
transform,~$f$ is an~$L^1$ and continuous function; actually, it is
rapidly decreasing. So, by using Lemma~\ref{lem:pos-type}, we see that
if~$\fh(a)$ is a positive kernel for each~$a \geq 0$, then~$f$ is a
function of positive type. From the definition of~$\fh(a)$, we see
that~$\fh(a)$ is positive for every~$a\geq 0$ if and only if the
matrices~$\varphi(a)$ are positive semidefinite for every~$a$. Notice
that requiring~$\varphi(a)$ to be positive semidefinite includes
requiring~$\varphi(a)$ to be Hermitian. This on its turn we achieve by
imposing the constraint
\[
f_{r,s;k} = \overline{f_{s,r;k}}\quad
\text{for all~$r$, $s$, and~$k$.}
\]

We may further simplify~\eqref{eq:f-hypergeom} by imposing two extra
conditions on the coefficients~$f_{r,s;k}$. Namely, when~$r-s$ is even
and~$|r-s|/2 - k \leq 0$, the hypergeometric series
in~\eqref{eq:f-hypergeom} becomes a Laguerre polynomial; see also the treatment about the eigenfunction decomposition of the Hankel transform in the book \cite[Chapter 9]{Akhiezer} by Akhiezer. Indeed we
have (cf.~(6.2.2) in Andrews, Askey, and Roy~\cite{AndrewsAR})
\[
{}_1 F_1\biggl({|r-s|/2 - k\atop |r-s|+1}; \pi\rho^2\biggr)
= \frac{n!}{(|r-s|+1)_n} L_n^{|r-s|}(\pi\rho^2),
\]
where~$n = k - |r-s|/2$,
\[
(a)_n = a (a+1) \cdots (a + n - 1)\quad
\text{for $n > 0$ with $(a)_0 = 1$},
\]
and~$L_n^\alpha$ is the Laguerre polynomial of degree~$n$ and
parameter~$\alpha$.

So we impose on the coefficients~$f_{r,s;k}$ the constraints
\begin{equation}
\label{eq:small-k}
f_{r,s;k} = 0\quad\text{if~$r - s$ is odd or $k < |r-s|/2$}.
\end{equation}
Then~\eqref{eq:f-hypergeom} becomes
\begin{multline}
\label{eq:f-laguerre}
f(\rho, \theta, \alpha) = 
\sum_{{\scriptstyle r,s=-N\atop\scriptstyle r-s\ \rm even}}^N \sum_{k=|r-s|/2}^d (-1)^{|r-s|/2} e^{-i(s\alpha +
  (r-s)\theta)}
f_{r,s;k}\\
\cdot D_{r,s;k}(\rho) L_n^{|r-s|}(\pi\rho^2) e^{-\pi\rho^2},
\end{multline}
where~$D_{r,s;k}(\rho) = C_{r,s;k}(\rho) n! / (|r-s| + 1)_n$.

To ensure that~$f$ is a real-valued function, we observe
from~\eqref{eq:u-coeffs} that when~$r-s$ is even,~$u^a_{r,s}(\rho,
\theta, \alpha) = \overline{u^a_{-r,-s}(\rho, \theta, \alpha)}$. Then
from~\eqref{eq:f-formula} it is clear that if~$\varphi_{r,s}(a) =
\overline{\varphi_{-r,-s}(a)}$ for all~$a \geq 0$ and~$r$, $s$,
function~$f$ is real-valued. So to ensure that~$f$ is real-valued it
suffices to impose the constraint
\begin{equation}
\label{eq:real-constraint}
f_{r,s;k} = \overline{f_{-r,-s;k}}\quad
\text{for all~$r$, $s$, and~$k$}.
\end{equation}

Finally, we would like to impose constraints on the
coefficients~$f_{r, s;k}$ so as to make function~$f$
$\Sym(\Kcal)$-invariant, that is, so as to have
\[
f(\rho, \theta + l 2\pi / 5, \alpha) 
= f(\rho, \theta, \alpha)\quad
\text{for~$l = 0$, \dots,~$4$}.
\]

From~\eqref{eq:f-laguerre}, it is easy to see that one way of
achieving this is to require that
\[
f_{r, s; k} = 0\quad\text{whenever~$r - s \not\equiv 0\pmod{5}$}.
\]
Since we already set~$f_{r, s; k} = 0$ when~$r - s$ is odd, we end up
with the constraint
\begin{equation}
\label{eq:mod-10}
f_{r, s; k} = 0\quad\text{whenever~$r - s \not\equiv 0\pmod{10}$}.
\end{equation}

To finish, we summarize the constraints imposed on the
coefficients~$f_{r, s;k}$:

\begin{enumerate}
\item We consider only the pairs~$r$, $s$ such that~$r-s \equiv
  0\pmod{10}$ and we set~$f_{r, s; k} = 0$ if~$k < |r-s|/2$. This has
  a double effect: It simplifies the hypergeometric series into a
  Laguerre polynomial and makes the function $\Sym(\Kcal)$-invariant;

\item We set~$f_{r, s; k} = \overline{f_{s, r; k}}$ for all~$r$, $s$,
  and~$k$. This makes the matrices~$\varphi(a)$ Hermitian. We then
  require these matrices to be positive semidefinite; this ensures
  that function~$f$ is of positive type;

\item We set~$f_{r, s; k} = \overline{f_{-r, -s; k}}$ for all~$r$,
  $s$, and~$k$. This ensures that function~$f$ is real-valued.
\end{enumerate}

With these constraints, we obtain the following formula for~$f$:
\begin{multline}
\label{eq:f-final}
f(\rho, \theta, \alpha) = 
\sum_{{\scriptstyle r,s=-N\atop\scriptstyle r-s \equiv 0\ ({\rm mod}\ 10)}}^N \sum_{k=|r-s|/2}^d (-1)^{|r-s|/2} e^{-i(s\alpha +
  (r-s)\theta)}
f_{r,s;k}\\
\cdot D_{r,s;k}(\rho) L_n^{|r-s|}(\pi\rho^2) e^{-\pi\rho^2}.
\end{multline}

\subsection{A semidefinite programming formulation: basic setup}

Recall our goal is to describe a semidefinite programming problem
whose solutions correspond to functions~$f \in L^1(\M(n))$ satisfying
the conditions of Theorem~\ref{thm:main}. In this
section, we take a first step by showing how to formulate the problem
of finding a function~$\varphi$ like~\eqref{eq:phi-def} as a
semidefinite programming problem.

We start by making an extra assumption, namely that all coefficients~$f_{r, s; k}$ are real. This allows us to work exclusively with real matrices, making the semidefinite programming problem we obtain smaller. Then the optimization problem will be solvable by state-of-the-art semidefinite programming solvers which are numerically stable. In principle, however, everything we
describe can be extended to the more general setting of complex
coefficients. \textit{It could be}, though we do not know,
that such a restriction to real numbers greatly worsens the bound
that can be obtained via our approach.

So let~$\varphi$ be given as in~\eqref{eq:phi-def} with
\[
f_{s, r;k} =
f_{r, s; k} = f_{-r,-s;k} = f_{-s, -r; k}\quad\text{for all~$r$, $s$,
  and~$k$}
\]
as we
require. Write~$y = (y_{-N}, \ldots, y_N)$ and consider the polynomial
\[
\sigma(a, y) = \sum_{{\scriptstyle r, s = -N\atop\scriptstyle r-s
    \equiv 0\ ({\rm mod}\ 10)}}^N \sum_{k=|r-s|/2}^d f_{r,s;k} a^{2k} y_r
y_s.
\]
Then~$\varphi(a)$ is positive semidefinite for all~$a$ if and only
if~$\sigma$ is a sum of squares (see~\S\ref{sec:sdpsos}). (Here, it is
easy to see that if~$\sigma$ is a sum of squares, then~$\varphi(a)$ is
positive semidefinite for all~$a$. The converse is also true; for a
proof see Choi, Lam, and Reznick~\cite{ChoiLR}. This fact is related
to the Kalman-Yakubovich-Popov lemma in systems and control; see the
discussion in Aylward, Itani, and Parrilo~\cite{AylwardIP}.)

The constraint that~$\sigma$ is a sum of squares can on its turn be
formulated in terms of positive semidefinite matrices. Following the
recipe given on \S\ref{sec:sdpsos}, one would obtain a semidefinite
programming formulation in terms of a single variable matrix of large
size. In our case, however, since~$\sigma$ is an even polynomial
in~$a$ and since the product~$y_r y_s$ only appears when~$r-s \equiv 0
\pmod{10}$, we may block-diagonalize the variable matrix, obtaining a
formulation in terms of smaller matrices, as we show now.

To this end,
let~$P_0$, $P_1$, \dots\ be a sequence of real, even, univariate
polynomials such that~$P_k$ has degree~$2k$. For~$j = 0$, \dots,~$9$,
let
\[
\Ical_j = \{\, r \in \Z : -N \leq r \leq N\quad\text{and}\quad r \equiv
  j\pmod{10}\,\}.
\]
For~$i = 0$, $1$ and~$j = 0$, \dots,~$9$, consider the matrix~$V^{ij}$
with rows and columns indexed by~$\{0, \ldots, \floor{d/2}\} \times
\Ical_j$ such that
\[
V^{ij}_{(l,r)(l',s)} = a^{2i} P_l(a) P_{l'}(a) y_r y_s
\]
for all~$l$, $l' = 0$, \dots,~$\floor{d/2}$ and~$r$, $s \in
\Ical_j$. Notice the entries of~$V^{ij}$ are even polynomials in~$a$.

Then~$\sigma$ is a sum of squares if and only if there are real,
positive semidefinite matrices~$Q^{ij}$, of appropriate dimensions,
such that
\[
\sigma = \sum_{i=0}^1 \sum_{j=0}^9 \langle Q^{ij}, V^{ij}\rangle,
\]
where~$\langle A, B \rangle = \tr(B^* A)$ denotes the trace inner
product between matrices~$A$ and~$B$.

Here, it is also important to observe that the symmetry
constraints~$f_{r, s;k} = f_{s,r;k}$ are implied by the fact that the
matrices~$Q^{ij}$ are symmetric.

So finding real numbers~$f_{r,s;k}$ such that~$f_{r,s;k} = f_{s,r;k}$
and such that~$\varphi(a)$ is positive semidefinite for all~$a$
amounts to finding real positive semidefinite matrices~$Q^{ij}$. Also
the other constraints that we imposed on the coefficients~$f_{r,s;k}$
can be represented as linear constraints on the entries of
the~$Q^{ij}$ matrices, as we show now.

For~$r$, $s$, $k$ with~$r-s\equiv 0\pmod{10}$, let~$j \in
\{0,\ldots,9\}$ be such that $r$, $s \in \Ical_j$. For~$i = 0$, $1$,
consider the matrix~$F^i_{r,s;k}$ with rows and columns indexed
by~$\{0,\ldots,\floor{d/2}\} \times \Ical_j$ such that
\[
(F^i_{r,s;k})_{(l,r)(l',s)} = \coeff(a^{2k}, a^{2i} P_l(a) P_{l'}(a))
\]
for all~$l$, $l' = 0$, \dots,~$\floor{d/2}$, where for a given
polynomial~$p$, $\coeff(a^k, p)$ is the coefficient of monomial~$a^k$
in~$p$. Then we obtain the coefficients~$f_{r,s;k}$ from the
matrices~$Q^{ij}$ by the formula
\[
f_{r,s;k} = \sum_{i=0}^1 \langle F^i_{r,s;k}, Q^{ij}\rangle.
\]

So constraints~\eqref{eq:small-k} and~\eqref{eq:real-constraint}
become
\[
\begin{split}
&\sum_{i=0}^1 \langle F^i_{r,s;k}, Q^{ij}\rangle = 0\quad
\text{if~$k < |r-s|/2$},\\
&\sum_{i=0}^1 (\langle F^i_{r,s;k}, Q^{ij}\rangle - \langle
F^i_{-r,-s;k}, Q^{ij'}\rangle) = 0\quad
\text{for all~$r$, $s$, and~$k$},
\end{split}
\]
where~$r$, $s \equiv j\pmod{10}$ and~$-r, -s\equiv
j'\pmod{10}$. Notice that constraint~\eqref{eq:mod-10} is already
implicit in our formulation, because we enforce by construction that
only pairs~$r$, $s$ with~$r-s\equiv 0\pmod{10}$ occur.

Also the function~$f$ can be computed from matrices~$Q^{ij}$. To see
how, for~$r$, $s = -N$, \dots,~$N$ such that~$r-s\equiv 0\pmod{10}$
and for~$k\geq |r-s|/2$, set
\begin{equation}
\label{eq:tau-def}
[\tau_{r,s}(a^{2k})](\rho, \theta, \alpha)
= (-1)^{|r-s|/2} e^{-i(s\alpha + (r - s)\theta)} D_{r,s;k}(\rho)
L_n^{|r-s|}(\pi\rho^2),
\end{equation}
where~$n = k - |r-s|/2$. When~$k < |r-s|/2$, we
set~$\tau_{r,s}(a^{2k}) = 0$, and then we extend~$\tau_{r,s}$ linearly
to all even polynomials in the variable~$a$.

For~$i = 0$, $1$ and~$j = 0$, \dots,~$9$, consider the
matrix~$\Fcal^{ij}$ with rows and columns indexed by~$\{0, \ldots,
\floor{d/2}\} \times \Ical_j$ such that
\[
[\Fcal^{ij}(\rho, \theta, \alpha)]_{(l,r)(l',s)} = \tau_{r,s}(a^{2i}
P_l(a) P_{l'}(a))
\]
for all~$l$, $l' = 0$, \dots,~$\floor{d/2}$ and~$r$, $s \in
\Ical_j$. Then, in view of~\eqref{eq:f-final} and since~$f_{r,s;k} =
0$ whenever~$k < |r-s|/2$, we have
\begin{equation}
\label{eq:f-matrices}
f(\rho, \theta, \alpha) = \sum_{i=0}^1 \sum_{j=0}^9
\langle \Fcal^{ij}(\rho, \theta, \alpha), Q^{ij}\rangle
e^{-\pi\rho^2}.
\end{equation}

\subsection{Ensuring nonpositiveness}

How can we ensure that function~$f$, given by~\eqref{eq:f-matrices},
satisfies constraint~(ii) of Theorem~\ref{thm:main}? This we do also in
terms of semidefinite programming constraints.

First, observe that we require~$f(x, A) \leq 0$ whenever~$\Kcal^\circ
\cap (x + A\Kcal^\circ) = \emptyset$. The latter happens if and only
if~$x \notin (\Kcal - A\Kcal)^\circ$, where~$\Kcal - A\Kcal$ is the
\defi{Minkowski difference} of~$\Kcal$ and~$A\Kcal$:
\[
\Kcal - A\Kcal = \{\, y-z : y\in\Kcal,\ z\in A\Kcal\,\}.
\]

The Minkowski difference~$\Kcal - A\Kcal$ is a polygon for all~$A \in
\so(2)$. Its vertices can be explicitly determined;
Figure~\ref{fig:mink} shows the Minkowski difference when~$A =
A(\alpha)$ (as defined in~\eqref{eq:A-alpha}) for~$\alpha \in
[-2\pi/10, 2\pi/10]$. By the symmetry of~$\Kcal$, this gives a full
characterization of the shape of the Minkowski difference for
all~$\alpha$.

\begin{figure}[tb]
\setbox0=\hbox{\includegraphics{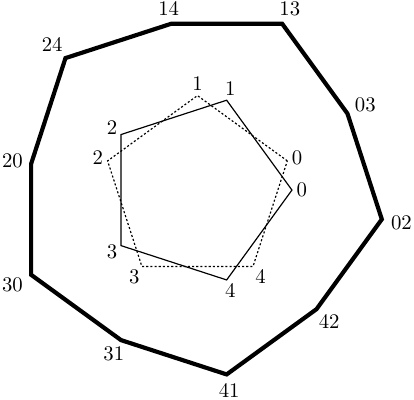}}
\def\picbox#1{\hbox to\wd0{\hfil\includegraphics{#1}\hfil}}

\noindent
\centerline{\hbox to0pt{\hss\picbox{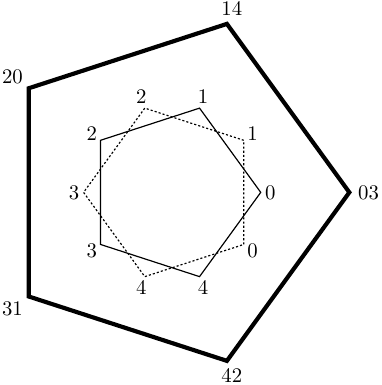}\hskip5mm\picbox{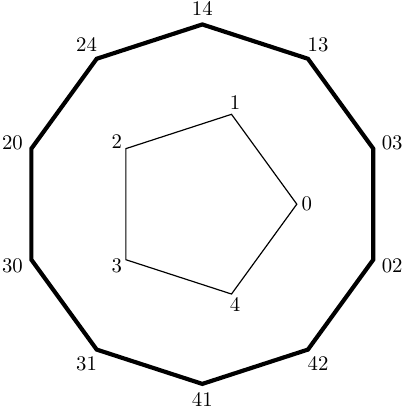}\hss}}

\vskip5mm

\noindent
\centerline{\hbox
  to0pt{\hss\picbox{m2.pdf}\hskip5mm\picbox{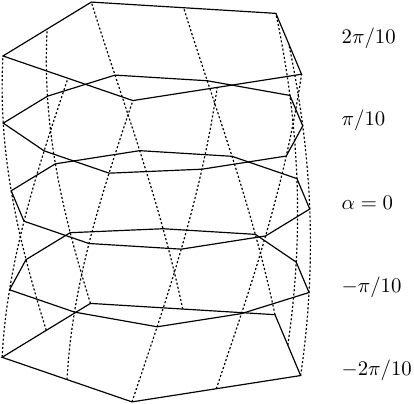}\hss}}

\vskip5mm

\caption{From left to right, top to bottom. In the first three
  pictures, we see the Minkowski difference $\Kcal - A(\alpha)\Kcal$
  (the outer shape) for~$\alpha = -2\pi/10$, $0$, and~$\pi/10$. The
  dashed pentagon in the center corresponds to~$A(\alpha)\Kcal$. The
  vertices of the pentagons are numbered from~$0$ to~$4$. The vertices
  of the Minkowski difference are numbered~$ij$, meaning that they
  correspond to~$x - y$, where~$x$ is the $i$th vertex of~$\Kcal$
  and~$y$ is the $j$th vertex of~$A(\alpha)\Kcal$. In the last picture
  we show the three-dimensional set~$\{\, (x, \alpha) : x \in \Kcal -
  A(\alpha)\Kcal\,\}$. Here,~$\alpha$ is on the vertical axis; every
  section perpendicular to the vertical axis corresponds to a
  Minkowski difference~$\Kcal - A(\alpha)\Kcal$.}
\label{fig:mink}
\end{figure}

Our approach to ensure that~$f$ is nonpositive outside of~$(\Kcal -
A\Kcal)^\circ$ consists of two steps. First, we observe that all
vertices of~$\Kcal - A\Kcal$ have norm at most~$1$. This implies that
we must have~$f(x, A) \leq 0$ whenever~$\|x\|\geq 1$. This condition
on~$f$ can be expressed in terms of sums of squares constraints.

Indeed, by writing~$z_1 = e^{i\theta}$ and~$z_2 = e^{i(\alpha -
  \theta)}$, we may rewrite~\eqref{eq:tau-def} as
\[
[\tau_{r,s}(a^{2k})](\rho, z_1, z_2) = 
(-1)^{|r-s|/2} z_1^{-r} z_2^{-s} D_{r,s;k}(\rho)
L_n^{|r-s|}(\pi\rho^2).
\]
In view of~\eqref{eq:f-matrices}, if we then have
\[
\sum_{i=0}^1 \sum_{j=0}^9 \langle \Fcal^{ij}(\rho, z_1, z_2),
Q^{ij}\rangle \leq 0\quad
\text{for all~$\rho \geq 1$},
\]
we have~$f(\rho, \theta, \alpha) \leq 0$ whenever~$\rho \geq 1$, as we
want.

For~$j = 0$, \dots,~$9$, consider the set
\[
\Pcal_j = \{\, (r, s) : 0 \leq r, s \leq N\quad\text{and}\quad
r-s\equiv j\pmod{10}\,\}.
\]
For~$i = 0$, $1$, $j = 0$, \dots,~$9$, consider the matrix~$W^{ij}$
with rows and columns indexed
by~$\{0, \ldots,\floor{d/2}\} \times \Pcal_j$ such that
\[
W^{ij}_{(l,p)(l',p')}(\rho, z_1, z_2) = (\rho^i P_l(\rho) z_1^{-u}
z_2^{-v}) (\rho^i P_{l'}(\rho) z_1^{u'} z_2^{v'}),
\]
where~$p = (u, v)$ and~$p' = (u', v')$  with~$p$, $p' \in \Pcal_j$,
and~$l$, $l' = 0$, \dots,~$\floor{d/2}$.

If there are real positive semidefinite matrices~$R^{ij}$ for~$i = 0$,
$1$ and~$j = 0$, \dots,~$9$, and~$S^j$ for~$j = 0$, \dots,~$9$, such
that
\begin{multline}
\label{eq:cylinder-sos}
\sum_{i=0}^1 \sum_{j=0}^9 (\langle\Fcal^{ij}(\rho, z_1, z_2),
Q^{ij}\rangle + \langle W^{ij}(\rho, z_1, z_2), R^{ij}\rangle)\\
{}+\sum_{j=0}^9 \langle (\rho^2 - 1) W^{0j}(\rho, z_1, z_2),
S^j\rangle = 0,
\end{multline}
then~$f(\rho, \theta, \alpha) \leq 0$ for all~$\rho \geq
1$. Notice~\eqref{eq:cylinder-sos} is a polynomial identity on
variables~$\rho$, $z_1$, $z_1^{-1}$, $z_2$, and~$z_2^{-1}$. In other
words, the left-hand side defines a polynomial and the identity above
states that this polynomial must be identically zero. To see
that~\eqref{eq:cylinder-sos} implies that~$f(\rho, \theta, \alpha)
\leq 0$ whenever~$\rho \geq 1$, one only has to notice that, for~$\rho
\geq 1$ and~$\theta, \alpha \in [0, 2\pi]$, the Hermitian matrices
\[
W^{ij}(\rho, e^{i\theta}, e^{i(\alpha-\theta)})\quad\text{and}\quad
(\rho^2 - 1)W^{0j}(\rho, e^{i\theta}, e^{i(\alpha-\theta)})
\]
are positive semidefinite, and then all inner products
in~\eqref{eq:cylinder-sos} become nonnegative.

Constraint~\eqref{eq:cylinder-sos} is not enough to ensure, however,
that~$f$ is nonpositive outside of the Minkowski difference. To ensure
nonpositiveness in the remaining region, we use a discretization
heuristic: We pick a sample of triples~$(\rho, \theta, \alpha)$
with~$\rho \leq 1$ for which we have to ensure that~$f(\rho, \theta,
\alpha) \leq 0$ and we do so explicitly for every point of the sample
using~\eqref{eq:f-matrices}. Afterwards, we have to analyze the
solution obtained in order to check that it indeed satisfies
condition~(ii) of Theorem~\ref{thm:main}. We will give details on this
approach in the next section.

One may model the constraint that~$f$ is nonpositive outside the
Minkowski difference using only sums of squares, without using the
discretization approach. The sizes of the matrices get very large,
however, making this approach computationally infeasible.

\subsection{The semidefinite programming problem and how to solve it}

We now describe the semidefinite programming problem we solve to
obtain upper bounds for the pentagon packing density.

Let~$N > 0$ be an integer and~$d \geq 1$ be an odd
integer. Let~$\Scal$ be a finite set of triples~$(\rho, \theta,
\alpha)$ with~$\rho \leq 1$ corresponding to elements~$(x, A) \in
\M(2)$ such that~$\Kcal^\circ \cap (x + A\Kcal^\circ) = \emptyset$. We
consider the following semidefinite programming problem:
\medbreak

\noindent
{\bf Problem A.}\enspace Find real, positive semidefinite
matrices~$Q^{ij}$, $R^{ij}$ for~$i = 0, 1$ and~$j = 0$, \dots,~$9$,
and~$S^j$ for~$j = 0$, \dots,~$9$, that minimize
\[
\sum_{i=0}^1\sum_{j=0}^9 \langle\Fcal^{ij}(0,0,0), Q^{ij}\rangle
\]
subject to the constraints
\begin{align}
&\label{eq:f-coeff-zero}\sum_{i=0}^1 \langle F^i_{r,s;k}, Q^{ij}\rangle = 0\quad
\text{if $k < |r-s|/2$, where $r$, $s\equiv j\pmod{10}$,}\\
&\label{eq:f-coeff-real}\sum_{i=0}^1 (\langle F^i_{r,s;k}, Q^{ij}\rangle - \langle
F^i_{-r,-s;k}, Q^{ij'}\rangle) = 0\quad
\text{where $r$, $s\equiv j\pmod{10}$}\\[-7mm]
&\null\phantom{\sum_{i=0}^1 \langle F^i_{r,s;k}, Q^{ij}\rangle - \langle
F^i_{-r,-s;k}, Q^{ij'}\rangle = 0}
\quad\ \text{and~$-r$, $-s\equiv j'\pmod{10}$,}\nonumber\\
&\label{eq:non-pos}\sum_{i=0}^1 \sum_{j=0}^9 (\langle \Fcal^{ij}(\rho, z_1, z_2),
Q^{ij}\rangle + \langle W^{ij}(\rho, z_1, z_2), R^{ij}\rangle)\\[-4mm]
&\hskip5cm\null + \sum_{j=0}^9 \langle (\rho^2 - 1) W^{0j}(\rho, z_1,
z_2), S^j\rangle = 0,\nonumber\\
&\label{eq:sample}\sum_{i=0}^1 \sum_{j=0}^9 \langle \Fcal^{ij}(\rho, \theta, \alpha),
Q^{ij}\rangle \leq 0\quad\text{for all $(\rho, \theta, \alpha) \in
  \Scal$},\\
&\label{eq:normal}\sum_{i=0}^1 \langle F^i_{0,0;0}, Q^{i0}\rangle = 1.
\end{align}
\medbreak

Conditions \eqref{eq:f-coeff-zero}--\eqref{eq:sample} were already
discussed in the previous sections.  Notice this is indeed a
semidefinite programming problem. In fact, the objective function and
all constraints but~\eqref{eq:non-pos} are clearly linear. As for the
polynomial identity~\eqref{eq:non-pos}, one only has to observe that
it can be turned into linear constraints by using the fact that a
polynomial is identically zero if and only if each monomial has a zero
coefficient (cf.~\S\ref{sec:sdpsos}).

Of Problem~A we have to explain our choice of objective function and
also the meaning of constraint~\eqref{eq:normal}. To obtain the best
possible bound from Theorem~\ref{thm:main}, we wish to minimize~$f(0,
I) / \lambda$, where
\[
\lambda = \int_{\M(2)} f(x, A)\, d(x, A).
\]
Constraint~\eqref{eq:normal} is a normalization constraint,
setting~$\lambda = 1$. Indeed, one has~$\lambda = f_{0,0;0}$, since
from the definition of~$\widehat{f}$ and the inversion formula
(cf.~\S\ref{sec:harmonic}) we have
\[
\begin{split}
f_{0,0;0} = (\widehat{f}(0))_{0,0} &=\langle \widehat{f}(0) \one,
\one\rangle\\
&=\biggl\langle \int_{\M(2)} f(x, A) U^0_{(x, A)^{-1}} \one\, d(x, A),
\one\biggr\rangle\\
&= \lambda \langle \one, \one \rangle\\
&= \lambda,
\end{split}
\]
where~$\one \in L^2(S^1)$ is the constant one function, so
that~$U^0_{(x, A)^{-1}} \one = \one$. Now, the objective function
evaluates~$f(0, I)$, that we wish to minimize.

To be able to solve Problem~A on the computer, the choice of the
sequence~$P_0$, $P_1$, \dots\ of polynomials which we use to define
our matrices is essential. A bad choice here can lead to numerical
instability that might prevent us from solving the problem.

In particular, we have observed that the monomial basis performs
specially badly. A much better choice are normalized Laguerre
polynomials, as had been observed in a similar setting by de Laat,
Oliveira, and Vallentin~\cite{LaatOV}. Namely, we set
\[
P_k(x) = \mu_k^{-1} L_k^0(2\pi x^2),
\]
where~$\mu_k$ is the absolute value of the coefficient of~$L_l^0(2\pi
x^2)$ with largest absolute value.

Also essential to the stability of Problem~A is the choice of the
basis used to express polynomial identity~\eqref{eq:non-pos}. Again,
the monomial basis is a poor choice. Instead we use the basis
\[
P_k(\rho^2) z_1^{-r} z_2^{-s}
\]
for~$k = 0$, \dots,~$d$ and $-N \leq r, s \leq N$ such that $r-s
  \equiv 0\pmod{10}$. 

This means that in order to express constraint~\eqref{eq:non-pos}, we
expand the corresponding polynomial in the above basis, and then
require each coefficient of the expansion to be zero.

In preliminary tests with reasonably dense samples for
constraint~\eqref{eq:sample}, we observed that most variables in
Problem~A did not seem to play a role, at least for the values of~$d$
and~$N$ that we considered. So we decided to discard all variable matrices
except for
\[
\text{$Q^{00}$, $Q^{05}$, $Q^{10}$, $Q^{15}$, $R^{00}$, $R^{05}$,
  $S^0$, and $S^5$,}
\]
and we observed that this did not have much effect on the optimal
value of the problem, while providing for simpler and more stable
problems. From now on, when we refer to Problem~A it should be
understood that we only use the variables listed above.

We now have a complete description of the semidefinite programming
problem to be solved, let us sketch how we obtained the bound of~$0.98103$
for the pentagon packing density.

We first solve Problem~A (with less variables, as explained above)
for~$d = 11$ and~$N = 5$, using a sample with~537 points. This sample
we pick as follows. We first pick~$5$ uniformly spaced values
for~$\alpha$ in~$[-2\pi/10, 0]$, starting with~$-2\pi/10$ and ending
with~$0$. For each such value of~$\alpha$, we pick in the square~$[-1,
  1]^2$ a uniformly spaced grid of~$50 \times 50$ points, and add to
the sample all triples~$(\rho, \theta, \alpha)$, where~$(\rho,
\theta)$ corresponds to a grid point outside of the Minkowski
difference~$(\Kcal - A(\alpha)\Kcal)^\circ$ and such that~$\rho \leq
1$. Moreover, the symmetry of~$\Kcal$ allows us to restrict our sample
considerably --- Figure~\ref{fig:sample} has an example.

\begin{figure}[t]
\centerline{\includegraphics{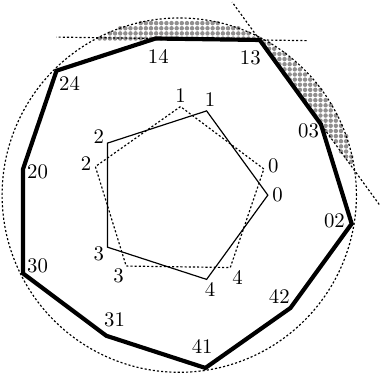}}
\vskip5mm

\caption{The points in gray are an example of a sample used in
  Problem~A; here we show the points in the sample for~$\alpha =
  \pi/10$. Each facet~$F$ of the Minkowski difference defines a
  line~$l_F$, its supporting hyperplane, and for the sample we would
  then pick all points in the grid that are inside the circle of
  radius~$1$ and that lie, for some facet~$F$ of the Minkowski
  difference, on the side of~$l_F$ that does not contain the
  origin. Since we work with $\Sym(\Kcal)$-invariant functions,
  however, we need not choose all these points: It suffices to
  consider only two adjacent facets of the Minkowski difference,
  instead of all the facets.}
\label{fig:sample}
\end{figure}

We observed, by evaluating the function~$f$ obtained via this
approach, that this small sample is already enough to enforce
condition~(ii) of Theorem~\ref{thm:main} on most of the required
domain. To really obtain a function~$f$ satisfying the conditions of
Theorem~\ref{thm:main}, however, we have to work a bit more.

Since we use a numerical solver for semidefinite programming, the
solutions we obtain for Problem~A are not really feasible, but almost
feasible. So we cannot be \textit{a priori} certain that the bound
given by Problem~A is really an upper bound.

To deal with this issue, we use the same approach outlined by de Laat,
Oliveira, and Vallentin~\cite{LaatOV}, which we briefly explain
here. First, we solve Problem~A in order to get an estimate of its
optimal value; say~$z^*$ is the numerical optimal value obtained. Then, we solve
a version of Problem~A in which the objective function is removed but
a constraint
\[
\sum_{i=0}^1\sum_{j=0}^9 \langle\Fcal^{ij}(0,0,0), Q^{ij}\rangle \leq
z^* + 10^{-5}
\]
is added. 

This problem is a feasibility problem, and for this reason the solver
will return a solution that is strictly feasible, i.e., a solution in
which the solution matrices are \textit{positive definite}, if one can
be found.

In this way, we manage to obtain a solution of Problem~A having
objective value close to what the optimal value is supposed to be, in
which each matrix has a minimum eigenvalue around~$10^{-6}$, whereas
the constraints are satisfied up to an absolute error
of~$10^{-9}$. By projecting the solution obtained onto the affine
subspace generated by constraints~\eqref{eq:f-coeff-zero},
\eqref{eq:f-coeff-real}, \eqref{eq:non-pos}, and~\eqref{eq:normal},
using double-precision floating-point arithmetic, we manage to drop
the absolute error to~$10^{-22}$, while not changing much the minimum
eigenvalues of the solution matrices. We give the matrices $Q^{00},
Q^{10}, Q^{05}, Q^{15}$ parametrizing function $f$ in Figure~\ref{fig:function}.

\begin{figure}
\includegraphics[scale=0.85]{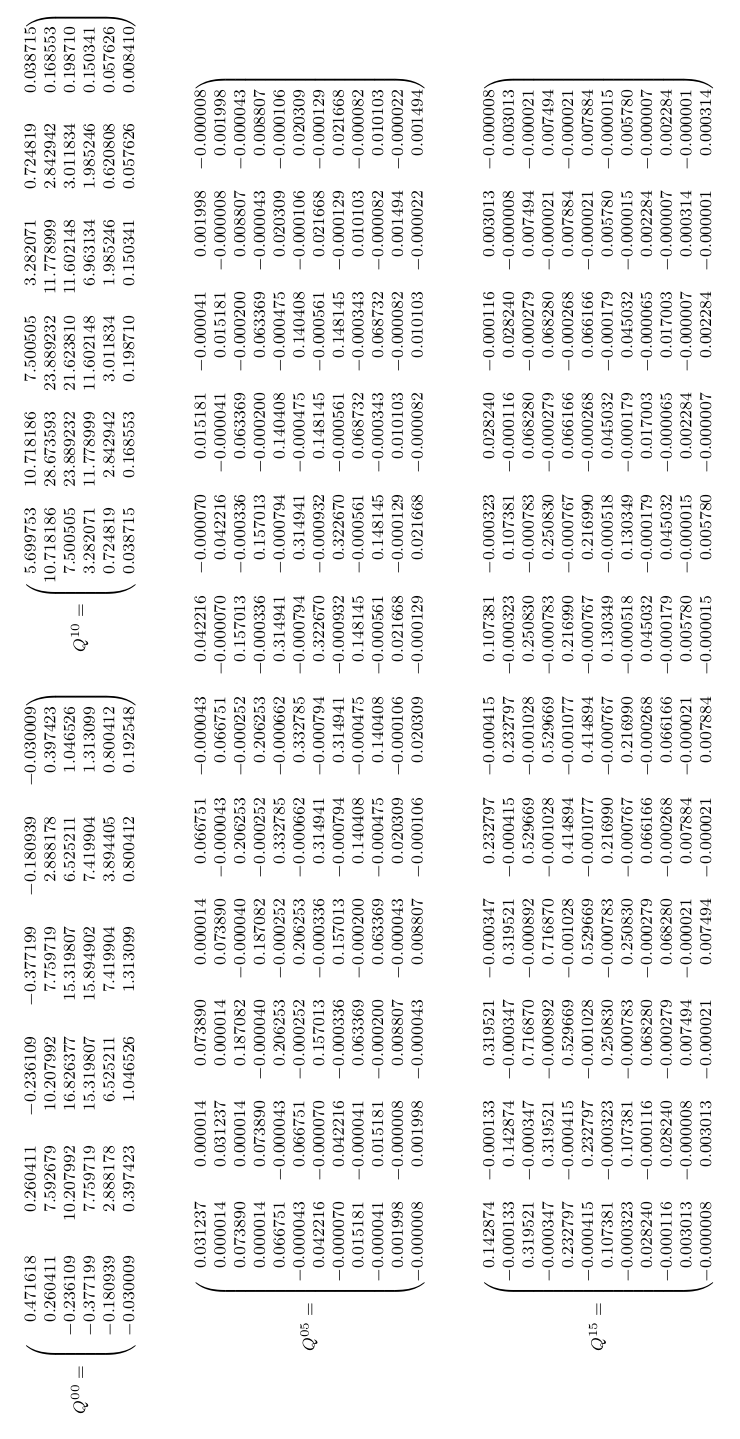}
\caption{Matrices $Q^{00}, Q^{10}, Q^{05}, Q^{15}$ parametrizing the
  function $f$ after the projection.}
\label{fig:function}
\end{figure}

So the approach detailed by de Laat, Oliveira, and
Vallentin~\cite{LaatOV} applies. Namely, since the minimum eigenvalues
of the solution matrices are big compared to the absolute errors, we
may be sure that by changing the solution matrices slightly, we may
ensure that the constraints are satisfied, thus obtaining a truly
feasible solution, without significantly changing the objective
value. Notice that we do not need to carry out this change in
practice, it suffices to know that it can be done.

Finally, we still have to show that the function~$f$ thus obtained
satisfies condition~(ii) of Theorem~\ref{thm:main}. We have said
that~$f$ satisfies condition~(ii) for most of the points on the
required domain. For instance, since we have
constraint~\eqref{eq:non-pos}, we know that~$f(\rho, \theta, \alpha)
\leq 0$ for all~$\rho \geq 1$. There are, however, points~$(\rho,
\theta, \alpha)$ with~$\rho \leq 1$ for which we have~$f(\rho, \theta,
\alpha)$ positive, while~(ii) would require this value to be
nonpositive.

Though~$f$ does not satisfy condition~(ii) of Theorem~\ref{thm:main}
for the pentagon~$\Kcal$, it satisfies this condition once we
enlarge~$\Kcal$ slightly. Indeed,~$f$ satisfies condition~(ii) for the
pentagon~$1.02\Kcal$. This we may verify by picking a fine enough
sample of points~$(\rho, \theta, \alpha)$ with~$\rho \leq 1$ for
which~$f$ has to be nonpositive, and computing the minimum value
of~$f$ on this sample using 256-bit-precision floating-point.
By computing the derivatives of~$f$, we may estimate how fine the
sample has to be and how large the absolute value of the minimum
of~$f$ on the sample has to be, in order for us to be sure that~$f$ is
nonpositive in the whole required region.

A side effect of our restriction of the variables is that the
function~$f$ we obtain is by construction such that
\[
f(\rho, \theta, \alpha + l 2\pi / 5) = f(\rho, \theta, \alpha)
\]
for all integer~$l$ (cf.~\eqref{eq:f-final}). This and the symmetry
of~$\Kcal$ helps us restrict the sample to points~$(\rho, \theta,
\alpha)$ with~$\alpha \in [-2\pi/10, 2\pi/10]$. To obtain our bound,
we had to use a sample of about 6.5~million points to check that~$f$
satisfies condition~(ii) of Theorem~\ref{thm:main}. Details of this
procedure can be found in the paper \cite{DostertGOV} by Dostert, Guzm\'an,
Oliveira, and Vallentin.

Enlarging the body~$\Kcal$ worsens the bound given by
Theorem~\ref{thm:main}, but since we consider a small enlargement
of~$\Kcal$, we still manage to obtain the bound of~$0.98103$.

Finally, we mention some of the computational tools used to generate
the semidefinite programming problem and solve it. To generate the
problem, we use a C++ program with a custom-made C++ library for
generating semidefinite programming problems, in particular dealing
with sums of squares constraints. As a solver we used
CSDP~\cite{CSDP}, and to analyze the resulting solution and check that
it is feasible we used a mix of SAGE~\cite{SAGE} and C++.

\section*{Acknowledgements}

We are thankful to Pier Daniele Napolitani and Claudia Addabbo from
the Maurolico Project\footnote{\url{http://maurolico.free.fr}}, who
provided us with a transcript of Maurolico's manuscript. In
particular, Claudia Addabbo provided us with a draft of her commented
Italian translation of the manuscript.


\begin{thebibliography}{99}

\bibitem{Akhiezer}
N.I. Akhiezer, \textit{Lectures on Integral Transforms}, Translations of Mathematical Monographs~70, American Mathematical Society, 1988.

\bibitem{AndrewsAR}
G.E. Andrews, R. Askey, and R. Roy, \textit{Special Functions},
Encyclopedia of Mathematics and its Applications~71, Cambridge
University Press, Cambridge, 1999.

\bibitem{AtkinsonJT}
S. Atkinson, Y. Jiao, and S. Torquato, Maximally dense packings of
two-dimensional convex and concave noncircular particles,
\textit{Physical Review~E}~86~(2012)~031302.

\bibitem{Guthrie}
Aristotle, \textit{On the Heavens}, translation by
W.K.C.~Guthrie, Harvard University Press, Cambridge, 2006.

\bibitem{AylwardIP}
E. Aylward, S. Itani, and P.A. Parrilo, Explicit SOS decompositions of
univariate polynomial matrices and the Kalman-Yakubovich-Popov Lemma,
in: \textit{Proceedings of the 46th IEEE Conference on Decision and
  Control}, 2007, pp.~5660--5665.

\bibitem{BachocNOV}
C. Bachoc, G. Nebe, F.M. de Oliveira Filho, and F. Vallentin, Lower
bounds for measurable chromatic numbers, \textit{Geometric and Functional
Analysis}~19~(2009)~645--661.

\bibitem{BentalN}
A. Ben-Tal and A. Nemirovski, \textit{Lectures on Modern Convex
  Optimization: Analysis, Algorithms, and Engineering Applications},
SIAM, Philadelphia, 2001.

\bibitem{BezdekK}
A. Bezdek and W. Kuperberg, Dense packing of space with various convex solids,
in: \textit{Geometry --- Intuitive, Discrete, and Convex, A Tribute to L\'aszl\'o Fejes T\'oth}, (I. B\'ar\'any, K. J. B\"or\"oczky, G. Fejes T\'oth, J. Pach, eds.), Bolyai Society Mathematical Studies 24, Springer, 2013, pp.~66--90.

\bibitem{Bochner}
S. Bochner, Hilbert distances and positive definite functions,
\textit{Annals of Mathematics}~42~(1941)~647--656.

\bibitem{BrassMP} 
P. Brass, W. Moser, and J. Pach, \textit{Research Problems in Discrete Geometry},
Springer, 2005.

\bibitem{CSDP}
B. Borchers, CSDP, A C Library for Semidefinite Programming,
\textit{Optimization Methods and Software}~11~(1999)~613--623.

\bibitem{Casselman}
B. Casselman, Can you do better?, \textit{Feature Column of the AMS},
\url{http://www.ams.org/samplings/feature-column/fc-2012-11}, 2012.

\bibitem{ChenEG}
E.R. Chen, M. Engel, and S.C. Glotzer, Dense crystalline dimer
packings of regular tetrahedra, \textit{Discrete \& Computational
  Geometry}~44~(2010)~253--280.

\bibitem{ChoiLR}
M.D. Choi, T.Y. Lam, and B. Reznick, Real zeros of positive
semidefinite forms I, \textit{Mathematische
  Zeitschrift}~171~(1980)~1--26.

\bibitem{CohnE}
H. Cohn and N.D. Elkies, New upper bounds on sphere packings I,
\textit{Annals of Mathematics}~157~(2003)~689--714.

\bibitem{CohnK}
H.~Cohn and A.~Kumar, 
Optimality and uniqueness of the Leech lattice among lattices,
\textit{Annals of Mathematics}~170 (2009)~1003--1050.

\bibitem{CohnKM}
H. Cohn, A. Kumar, S.D. Miller, D. Radchenko, and M.S.~Viazovska,
The sphere packing problem in dimension $24$,
arXiv:1603.06518 [math.NT], 2016, 12pp.

\bibitem{CohnM} 
  H. Cohn and S.D. Miller, Some properties of optimal functions for
  sphere packing in dimensions $8$ and $24$, arXiv:1603.04759
  [math.MG], 2016, 23pp.

\bibitem{CohnZ}
H. Cohn and Y. Zhao, Sphere packing bounds via spherical codes,
\textit{Duke Mathematical Journal}~163~(2014)~1965--2002.

\bibitem{Conway}
J.B. Conway, \textit{A Course in Functional Analysis}, Graduate Texts
in Mathematics~96, Springer-Verlag, New York, 1985.

\bibitem{ConwayS}
J.H. Conway and N.J.A. Sloane, \textit{Sphere packings, lattices and groups}, Grundlehren der mathematischen Wissenschaften~290, 3rd ed., Springer-Verlag, New York, 1999

\bibitem{ConwayT}
J.H. Conway and S. Torquato, Packing, tiling, and covering with
tetrahedra, \textit{Proceedings of the National Academy of Sciences of
  the USA}~103~(2006)~10612--10617.

\bibitem{DelsarteGS}
P. Delsarte, J.M. Goethals, and J.J. Seidel, Spherical codes and
designs, \textit{Geometriae Dedicata}~6~(1977)~363--388.

\bibitem{DelsarteL}
P. Delsarte and V.I. Levensthein,
Association schemes and coding theory,
\textit{IEEE Transactions on Information Theory}~IT-44~(1988)~2477--2504.

\bibitem{DostertGOV}
M. Dostert, C. Guzm\'an, F.M. de Oliveira Filho, and F. Vallentin,
New upper bounds for the density of translative packings of three-dimensional convex bodies with tetrahedral symmetry, arXiv:1510.02331 [math.MG], 2015, 29pp.

\bibitem{FejesTothFV}
G. Fejes T\'oth, F. Fodor, and V. V{\'{\i}}gh, The packing density of the $n$-dimensional cross-polytope, \textit{Discrete \& Computational
  Geometry}~54~(2015)~182--194.

\bibitem{FejesTothK}
G. Fejes T\'oth and W. Kuperberg, Packing and covering with convex sets, in: \textit{Handbook of convex geometry. Vol. B} (P.M. Gruber and J.M. Wills eds.), North-Holland, Amsterdam, 1993, pp.~799--860.

\bibitem{Folland}
G.B. Folland, \textit{A Course in Abstract Harmonic Analysis}, Studies
in Advanced Mathematics, CRC Press, Boca Raton, 1995.

\bibitem{GravelEK}
S. Gravel, V. Elser, and Y. Kallus, Upper bound on the packing density
of regular tetrahedra and octahedra, \textit{Discrete \& Computational
  Geometry}~46~(2011)~799--818.

\bibitem{Hales}
T.C. Hales, A proof of the Kepler conjecture, \textit{Annals of
  Mathematics}~162~(2005)~1065--1185.

\bibitem{HalesEtc}
T.C. Hales, M. Adams, G. Bauer, D. Tat Dang, J. Harrison, T. Le Hoang, C. Kaliszyk, V. Magron, S. McLaughlin, T. Tat Nguyen, T. Quang Nguyen, T. Nipkow, S. Obua, J. Pleso, J. Rute, A. Solovyev, A. Hoai Thi Ta, T. Nam Tran, D. Thi Trieu, J. Urban, K. Khac Vu, and R. Zumkeller,
A formal proof of the Kepler conjecture, arXiv:1501.02155 [math.MG], 2015, 21pp.

\bibitem{HalesK}
T.C. Hales and W. Kusner, Packings of regular Pentagons in the plane,
arXiv:1602.07220 [math.MG], 2016, 26pp.

\bibitem{KallusK}
Y. Kallus and W. Kusner, The local optimality of the double lattice packing, arXiv:1509.02241 [math.MG], 2015, 23pp.

\bibitem{Karp}
R.M. Karp, Reducibility among combinatorial problems, in:
\textit{Complexity of Computer Computations} (Proceedings of a
symposium on the Complexity of Computer Computations, IBM Thomas
J. Watson Research Center, Yorktown Heights, New York, 1972;
R.E. Miller, J.W. Thatcher, eds.), Plenum Press, New York, 1972,
pp.~85--103.

\bibitem{Kepler}
J. Kepler, Vom sechseckigen Schnee (\textit{Strena seu de Nive
  sexangula}, published in~1611), translation with introduction and
notes by Dorothea Goetz, \textit{Ostwalds Klassiker der exakten
  Wissenschaften}~273, Akademische Verlagsgesellschaft Geest u.~Portig
K.-G., Leipzig, 1987.

\bibitem{KlerkV}
E. de Klerk and F. Vallentin, On the Turing model complexity of
interior point methods for semidefinite programming,
arXiv:1507.03549 [math.OC], 2015, 16pp.

\bibitem{KuperbergK}
G. Kuperberg and W. Kuperberg, Double-lattice packings of convex
bodies in the place, \textit{Discrete \& Computational
  Geometry}~5~(1990)~389--397.

\bibitem{LaatOV}
D. de Laat, F.M. de Oliveira Filho, and F. Vallentin, Upper bounds for
packings of spheres of several radii, 
\textit{Forum of Mathematics, Sigma} 2 (2014), e23 (42 pages).

\bibitem{LagariasZ}
J.C. Lagarias and C. Zong, Mysteries in packing regular tetrahedra,
\textit{Notices of the AMS}~59~(2012)~1540--1549.

\bibitem{Laurent}
M. Laurent, Sums of squares, moment matrices and optimization, in:
\textit{Emerging Applications of Algebraic Geometry}, IMA Volumes in
Mathematics and its Applications~149 (M. Putinar and S. Sullivant,
eds.), Springer-Verlag, 2009, pp.~157--270.

\bibitem{Lovasz}
L. Lov\'asz, On the Shannon capacity of a graph, \textit{IEEE
  Transactions on Information Theory}~IT-25~(1979)~1--7.

\bibitem{Maurolico}
F. Maurolico, De quinque solidis, quae vulgo regularia dicuntur, quae
videlicet eorum locum impleant, et quae non, contra commentatorem
Aristotelis, Averroem, 1529.

\bibitem{McElieceRR}
R.J. McEliece, E.R. Rodemich, and H.C. Rumsey, Jr., The Lov\'asz bound
and some generalizations, \textit{Journal of Combinatorics,
  Information \& System Sciences}~3~(1978)~134--152.

\bibitem{Rogers}
C.A. Rogers, \textit{Packing and Covering}, Cambridge University
Press, 1964.

\bibitem{Schrijver}
A. Schrijver, A comparison of the Delsarte and Lov\'asz bounds,
\textit{IEEE Transactions on Information
  Theory}~IT-25~(1979)~425--429.

\bibitem{SAGE} 
W.A. Stein \textit{et al.}, Sage Mathematics Software (Version 4.8),
The Sage Development Team, 2012, {\tt http://www.sagemath.org}.

\bibitem{Sugiura}
M. Sugiura, \textit{Unitary Representations and Harmonic Analysis: An
  Introduction}, Kodansha Scientific Books, Tokyo, 1990.

\bibitem{Viazovska}
M.S.~Viazovska, The sphere packing problem in dimension $8$,
arXiv:1603.04246 [math.NT], 2016, 22pp.

\bibitem{Watson}
G.N. Watson, \textit{A Treatise on the Theory of Bessel Functions},
Cambridge University Press, 1922.

\bibitem{Ziegler}
G.M. Ziegler, Three Mathematics Competitions, in: \textit{An
  Invitation to Mathematics: From Competitions to Research}
(D. Schleicher and M. Lackmann, eds.), Springer-Verlag, Berlin, 2011,
pp.~195--206. 

\end{thebibliography}
\end{document}